\newcommand{\R}{\mathbb{R}}
\newcommand{\Rn}{{\R^n}}
\newcommand{\E}{\mathbb{E}}
\newcommand{\cS}{{\mathscr{S}}}
\newcommand{\cA}{{\mathscr{A}}}
\newcommand{\cQ}{{\mathcal{Q}}}
\newcommand{\cB}{{\mathcal{B}}}
\newcommand{\cP}{{\mathcal{P}}}
\newcommand{\cR}{{\mathcal{R}}}
\newcommand{\cI}{{\mathcal{I}}}
\newcommand{\cO}{{\mathcal{O}}}
\newcommand{\cF}{{\mathcal{F}}}
\newcommand{\cW}{{\mathcal{W}}}
\newcommand{\Stilde}{{\widetilde{S}}}
\newcommand{\Rtilde}{{\widetilde{R}}}
\newcommand{\fast}{f^\ast}
\newcommand{\gast}{g^\ast}
\newcommand{\Linfty}{{L^\infty}}
\newcommand{\Loneloc}{{L^1_{\text{loc}}}}
\newcommand{\ra}{\rightarrow}
\DeclareMathOperator*{\essinf}{ess\,inf}
\newcommand{\bmo}{{\textnormal{BMO}}} 
\def\BMO#1#2{\textnormal{BMO}_{#1}^{#2}}
\def\BLO#1#2{\textnormal{BLO}_{#1}^{#2}}
\def\XXint#1#2#3{{\setbox0=\hbox{$#1{#2#3}{\int}$ }
\vcenter{\hbox{$#2#3$ }}\kern-.57\wd0}}
\newtheorem{theorem}{Theorem}[section]
\newtheorem{lemma}[theorem]{Lemma}
\newtheorem*{corollary*}{Corollary}
\theoremstyle{definition}
\newtheorem{definition}[theorem]{Definition}
\theoremstyle{remark}
\numberwithin{equation}{section}
\begin{document}

\newpage

\title[Mean oscillation bounds on rearrangements]
{Mean oscillation bounds on rearrangements}

\author[Burchard]{Almut Burchard}
\address{(A.B.) University of Toronto, Department of Mathematics, Toronto, ON M5S 2E4, Canada}
\curraddr{}
\email{almut@math.toronto.edu}

\author[Dafni]{Galia Dafni}
\address{(G.D.) Concordia University, Department of Mathematics and Statistics, Montr\'{e}al, QC H3G 1M8, Canada}
\curraddr{}
\email{galia.dafni@concordia.ca}
\thanks{A.B. was partially supported by Natural Sciences and Engineering Research Council (NSERC) of Canada. G.D. was partially supported by the Natural Sciences and Engineering Research Council (NSERC) of Canada and the Centre de recherches math\'{e}matiques (CRM). R.G. was partially supported by the Centre de recherches math\'{e}matiques (CRM), the Institut des sciences math\'{e}matiques (ISM), and the Fonds de recherche du Qu\'{e}bec -- Nature et technologies (FRQNT)}

\author[Gibara]{Ryan Gibara}
\address{(R.G.) Universit\'{e} Laval, D\'{e}partement de math\'{e}matiques et de statistique, Qu\'{e}bec, QC G1V 0A6, Canada}
\curraddr{}
\email{ryan.gibara@gmail.com}

\subjclass[2010]{Primary 42B35 46E30.}

\begin{abstract}

We use geometric arguments to prove explicit bounds on the mean oscillation for two important rearrangements on $\Rn$.  For the decreasing rearrangement $f^*$ of a rearrangeable function $f$ of bounded mean oscillation (BMO) on cubes, we improve a classical inequality of Bennett--DeVore--Sharpley, $\|f^*\|_{\bmo(\R_+)}\leq C_n \|f\|_{\bmo(\R^n)}$, by showing the growth of $C_n$ in the dimension $n$ is not exponential but  at most of the order of $\sqrt{n}$.  This is achieved by comparing cubes to a family of rectangles for which one can prove a dimension-free Calder\'{o}n--Zygmund decomposition. By comparing cubes to a family of polar rectangles, we provide a first proof that an analogous inequality holds for the symmetric decreasing rearrangement, $Sf$. 
\end{abstract}

\maketitle


\section{{\bf  Introduction}}
\label{sec:intro}


Equimeasurable rearrangements are used in analysis and mathematical physics to reduce extremal problems involving functions on higher-dimensional spaces to problems for functions of a single variable. Here, we consider the action of two classical rearrangements on the space BMO, consisting of functions with bounded mean oscillation. The rearrangements are {\em symmetrization} (symmetric decreasing 
rearrangement), which replaces a given 
function $f$ on $\Rn$ with 
a radially decreasing function $Sf$, 
and the {\em decreasing rearrangement}, which replaces 
$f$ with a decreasing function $f^*$ on $\R_+$.

Symmetrization, when applicable, offers a direct path to geometric inequalities in functional analysis. To name a few examples, in the first computation of the sharp constants in the Sobolev inequality for $\|\nabla f\|_p$, due to Talenti~\cite{Talenti} and Aubin~\cite{Aubin}, symmetrization is used to reduce the problem to radial functions. A similar construction appears in the work of Lieb~\cite{BL} on extremals of the 
Hardy--Littlewood--Sobolev inequality. Sharp rearrangement inequalities, which also characterize equality cases, are crucial for identifying extremals. 
They can also be used constructively, according to the 
Competing Symmetries principle of Carlen--Loss~\cite{CL}, 
to find sequences with rearrangements that converge strongly 
to a particular extremal.

Many applications rely on classical symmetrization 
inequalities in Lebesgue and Sobolev spaces. 
Rearrangements generally preserve $L^p$-norms and 
contract $L^p$-distances for $1\le p\le\infty$. 
Symmetrization also improves the modulus of continuity. 
By the P\'olya--Szeg\H{o} 
inequality, it decreases $\|\nabla f\|_p$ for $p\ge 1$, and 
by the Riesz--Sobolev inequality it increases norms in 
certain negative Sobolev spaces $\|f\|_{H^{-s}}$ for $0<s<\frac n 2$.
These inequalities indicate that  symmetrization reduces the 
overall oscillation of $f$. They contain the classical 
isoperimetric inequality as the limiting case where $f$ is the 
characteristic function of a set of finite volume.

The decreasing rearrangement, on the other hand, is 
determined by the distribution function, which contains 
no information about the shape of level sets. It can be defined 
on a general measure space and is widely applied, for example, 
on metric spaces endowed with a doubling measure. The 
decreasing rearrangement plays an important role in interpolation 
theory as many familiar function spaces are invariant under equimeasurable rearrangements, including the Lebesgue, Lorentz, and Orlicz spaces (see~\cite{bs}). 

We study the rearrangements of functions of bounded mean oscillation (BMO), a condition introduced by John and Nirenberg in~\cite{jn}. By definition, a locally integrable function $f$ is in $\bmo(\R^n)$ if it satisfies a uniform bound on its mean oscillation over cubes. Unlike $L^p$-spaces, BMO is not invariant under equimeasurable rearrangements (for instance, $f$ defined by $f(x)=(-\log |x|)_+$ lies in $\bmo(\R)$, but $g$ defined by $g(x)=f(\frac12 x)$ for $x>0$ and $g(x)=0$ for $x\le 0$ does not.) Moreover, recent results~\cite{dhky} on the John--Nirenberg space $JN_p$, a variant of BMO introduced in~\cite{jn}, show that spaces defined by mean oscillation need not be preserved by the decreasing rearrangement. 

The work of Bennett--DeVore--Sharpley in~\cite{bdvs} implies, as shown in~\cite{ko2}, that the decreasing rearrangement of a function $f$ in $\bmo(\Rn)$ belongs to $\bmo(\R_+)$. Moreover, there are constants $C_n$ (depending only on dimension) such that
\begin{equation}
\label{eq-bds}
\|\fast\|_{\bmo}\leq C_n \|f\|_{\bmo}\,.
\end{equation}
The explicit bound on the constant arising from \cite{bdvs} is $C_n\le 2^{n+5}$. When $n=1$, the results of Klemes~\cite{kl}, along with subsequent steps taken by Korenovskii~\cite{ko1}, imply that $C_1=1$. (We neglect, for the moment, the distinction between $\Rn$ and finite cubes, see Section~\ref{sec:bounds}.)

Our first main result is that the exponential dependence on the dimension in the bound of Bennett--DeVore--Sharpley can be eliminated.
\begin{theorem}
\label{thm-newbound}
For rearrangeable $f\in\bmo(\R^n)$, Eq.~\eqref{eq-bds} holds with $C_n\le 2(1+2\sqrt{n-1})$.
\end{theorem}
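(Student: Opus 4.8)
The source of the exponential factor $2^{n+5}$ in Bennett--DeVore--Sharpley is the Calder\'on--Zygmund decomposition over cubes: splitting a cube into its $2^n$ dyadic children can multiply the average of $f$ by $2^n$. The plan is to carry out the whole argument over a family $\mathcal{R}$ of \emph{rectangles} for which the Calder\'on--Zygmund decomposition is dimension-free, and to pay a single factor of order $\sqrt{n}$ only at the end, when returning from $\mathcal{R}$ to cubes.

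\emph{The rectangle family.} Take $\mathcal{R}$ to consist of the rectangles obtained from a cube by repeatedly bisecting a longest edge. Every $R\in\mathcal{R}$ then has edge lengths taking at most two values, in the ratio $1:2$ (so its aspect ratio is $\le 2$), and $R$ has exactly two children, each of half its volume. Consequently, if $R'$ is a child of $R$,
\[
|(f)_{R'}-(f)_R|\le \frac{1}{|R'|}\int_{R'}|f-(f)_R|=\frac{|R|}{|R'|}\cdot\frac{1}{|R|}\int_{R}|f-(f)_R|=2\,\frac{1}{|R|}\int_R|f-(f)_R|,
\]
i.e.\ the average jumps by a factor at most $2$, with no dependence on $n$. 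Thus for a cube $Q_0$ and $\lambda>(f)_{Q_0}$, the maximal $R\in\mathcal{R}$ with $R\subseteq Q_0$ and $(f)_R>\lambda$ form a pairwise disjoint family $\{R_j\}$ with $f\le\lambda$ a.e.\ on $Q_0\setminus\bigcup_j R_j$, and with $(f)_{R_j}-\lambda$ controlled, dimension-freely, by the mean oscillation of $f$ over $R_j$.

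\emph{The rearrangement estimate over $\mathcal{R}$.} I would then prove $\|f^\ast\|_{\bmo(\R_+)}\le 2\|f\|_{\bmo(\mathcal{R})}$, where $\|f\|_{\bmo(\mathcal{R})}$ denotes the supremum of the mean oscillations of $f$ over $R\in\mathcal{R}$. Since $f^\ast$ is decreasing, the mean oscillation of $f^\ast$ on an interval $(a,b)$ equals $\tfrac{2}{b-a}\int_a^b\bigl(f^\ast-(f^\ast)_{(a,b)}\bigr)_+$, and by the Hardy--Littlewood identity $\int_0^t f^\ast=\sup_{|E|=t}\int_E f$ this quantity can be rewritten in terms of the average of $f-\lambda$ over a super-level set $\{f>\lambda\}$ at the appropriate height $\lambda$; applying the dimension-free decomposition above at that height and estimating $f$ on the resulting rectangles $R_j$ then bounds it, following the scheme Klemes and Korenovskii used in one dimension. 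This is the step producing the leading factor $2$ of the theorem --- traceable to the two children of a bisected rectangle, or equivalently to the relation between the one-sided and two-sided oscillation of a decreasing function --- and it should be a reasonably direct adaptation of the one-dimensional argument, the only new feature being that it now runs over rectangles.

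\emph{Rectangles versus cubes: the main obstacle.} It remains to show $\|f\|_{\bmo(\mathcal{R})}\le(1+2\sqrt{n-1})\,\|f\|_{\bmo(\R^n)}$. A rectangle $R\in\mathcal{R}$ with $a\ge 1$ short edges is a disjoint union of $2^{\,n-a}$ congruent dyadic subcubes $Q_k$, so that $\frac{1}{|R|}\int_R|f-(f)_R|\le\|f\|_{\bmo(\R^n)}+2^{-(n-a)}\sum_k|(f)_{Q_k}-(f)_R|$, and one must bound the last sum by $2\sqrt{n-1}\,\|f\|_{\bmo(\R^n)}$. Expanding $(f)_{Q_k}-(f)_R$ as a telescoping sum of martingale-type differences along the at most $n-1$ bisections leading from $R$ down to $Q_k$, the triangle inequality only gives a bound linear in $n$; the gain to $\sqrt{n-1}$ requires summing the contributions of the successive bisection levels in $\ell^2$ rather than in $\ell^1$ --- using the orthogonality of the increments --- together with a uniform, dimension-free control of the oscillation carried by each level in terms of the mean oscillation of $f$ over cubes. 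I expect this last point --- making the square-function accounting genuinely effective (rather than circular) and tying each level's energy back to the cube norm --- to be the hard part of the paper, and it is exactly there that the exponential dependence on $n$ of Bennett--DeVore--Sharpley is removed. Combining the rearrangement estimate with this comparison yields Eq.~\eqref{eq-bds} with $C_n\le 2(1+2\sqrt{n-1})$.
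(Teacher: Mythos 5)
Your overall route is the same as the paper's: the rectangles you generate by repeatedly bisecting a longest edge are (up to relabelling) Wik's false cubes, i.e.\ the basis $\cW$; the dimension-free Calder\'on--Zygmund decomposition with constant $2$ is the paper's bisection-basis lemma; and the leading factor $2$ in $\|f^*\|_{\bmo}\leq 2\|f\|_{\BMO{\cW}{}}$ does indeed come from the parent-to-child volume ratio (the paper gets this by verifying the hypotheses of a general criterion from its reference [BDG2] -- partition $\R^n$ into rectangles of measure at least $t$, note their averages lie below the level by Hardy--Littlewood, then bisect -- so your Klemes--Korenovskii-style sketch is in the right spirit, though not carried out). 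The genuine gap is in the step you yourself flag as ``the hard part'': you assert that $\|f\|_{\BMO{\cW}{}}\leq(1+2\sqrt{n-1})\|f\|_{\bmo}$ should follow from an $\ell^2$ accounting of martingale-type increments, but you supply neither the concentration inequality nor the dimension-free bound on each individual increment. Without the latter, orthogonality alone only yields $C\sqrt{n}$ with an unidentified constant $C$, not the stated $2\sqrt{n-1}$; and telescoping down the bisection tree, as you propose, passes through intermediate rectangles rather than cubes, which obscures where that uniform bound should come from.

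Both missing ingredients are elementary, and in particular the point you fear may be circular is not. First, for equal cubes $Q_1,Q_2$ sharing a face one has $|f_{Q_1}-f_{Q_2}|\leq 4\|f\|_{\bmo}$: let $Q_0$ be the equal cube bisected by the common face; since $f_{Q_j}$ is the average of the means of the two halves of $Q_j$, those two means deviate from $f_{Q_j}$ by a common amount, which is at most $\cO(f,Q_j)\leq\|f\|_{\bmo}$; applying this to the shared half $Q_j\cap Q_0$ inside $Q_j$ and inside $Q_0$ gives $|f_{Q_j}-f_{Q_0}|\leq 2\|f\|_{\bmo}$ for $j=1,2$, hence the bound $4\|f\|_{\bmo}$. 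Second, instead of the bisection tree, index the $2^m$ subcubes of a false cube with $m\leq n-1$ doubled directions by $\nu\in\{0,1\}^m$ and view $g(\nu)=f_{Q(\nu)}$ as a function of $m$ independent fair Bernoulli variables: flipping one coordinate moves to an adjacent cube, so $g$ has bounded differences $a_i=4\|f\|_{\bmo}$, and the Doob-martingale argument (uncorrelated increments, each of variance at most $\tfrac14 a_i^2$, then Cauchy--Schwarz) gives $\E|g-\E g|\leq \tfrac12\bigl(\sum_i a_i^2\bigr)^{1/2}=2\sqrt{m}\,\|f\|_{\bmo}$. Inserted into your partition inequality for the oscillation over the false cube, this yields exactly $(1+2\sqrt{n-1})$, and together with the factor $2$ from the bisection Calder\'on--Zygmund step, the claimed $C_n\leq 2(1+2\sqrt{n-1})$.
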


To achieve this, we exhibit a basis 
$\cW$ of rectangles in $\Rn$, comparable with cubes, such that
\begin{equation}
\label{eq-newbound}
\|\fast\|_{\bmo}\leq 2\|f\|_{\BMO{\cW}{}}\, ,
\end{equation}
where $\|f\|_{\BMO{\cW}{}}$ denotes
the supremum of the mean oscillation of $f$ over these 
rectangles.  Moreover,
\begin{equation} 
\label{eq:falsecompare}
\|f\|_{\bmo}\le \|f\|_{\BMO{\cW}{}}\le (1+2\sqrt{n-1})\|f\|_{\bmo}\,.
\end{equation}

The family of rectangles $\cW$ consists of the {\em false cubes} introduced by Wik in his work on the constants in the John-Nirenberg inequality~\cite{wik}. The collection of false cubes has 
the property that each false cube can be bisected into two smaller false cubes, and in Section~\ref{sec:bounds} we prove Eq.~\eqref{eq-newbound} for any such {\em bisection basis}. The equivalence of the seminorms 
$\|\cdot\|_{\BMO{\cW}{}}$ and $\|\cdot\|_{\BMO{}{}}$
up to a constant of order $O(\sqrt{n})$
was first proved by Wik (Theorem 2 in~\cite{wik}), using a combinatorial 
argument and a slightly different definition of the norms.
Here, we give a simple probabilistic proof for Eq.~\eqref{eq:falsecompare}.

While the sharp constant $C_1=1$ of Klemes-Korenovskii holds in dimension $n=1$, the question of the best constants $C_n$ in Eq.~\eqref{eq-bds}, and whether such constants can be dimension-free, remains open for $n>1$.  However, sharp results are known for other BMO-spaces.
For the smaller space $\BMO{\cR}{}$, called 
anisotropic or strong BMO, consisting of functions of bounded 
mean oscillation on rectangles, a sharp constant of $1$ was obtained by Korenovskii in ~\cite{ko3} by using a higher-dimensional version of the Riesz Rising Sun lemma for rectangles - see also Korenovskii--Lerner--Stokolos~\cite{kls}. 

Korenovskii shows that for a rectangle $R_0$,
\begin{equation}
\label{eq-KK}
\|\fast\|_{\bmo(0,|R_0|)}\leq \|f\|_{\BMO{\cR}{}(R_0)}\,.
\end{equation} 
The inequalities of Klemes and Korenovskii extend to the entire space $\Rn$ (using the techniques in Section~\ref{sec-rect_to_cubes} below). On the other hand, for the larger space dyadic BMO,  working with the $L^2$-mean oscillation, Stolyarov, Vasyunin, and Zatitskiy~\cite{svz} obtained a sharp dimension-dependence of $2^{n/2}$ in the boundedness of the decreasing rearrangement.

There has been recent interest in dimension-free bounds for averaging operators~\cite{bmsw1, bmsw2}, and  dimension-free constants in the John--Nirenberg inequality~\cite{css}. Improved bounds on the decreasing  rearrangement on BMO yield improved constants in the John--Nirenberg inequality, by reducing to the one-dimensional case: from Eq.~\eqref{eq-KK}, Korenovskii~\cite{ko3} obtained sharp constants for the John--Nirenberg inequality in anisotropic BMO on a rectangle. 

It is apparent from the results discussed above that the geometry of the sets over which mean oscillation is measured plays a crucial role in the properties of the corresponding BMO-space. In previous work, two of the authors~\cite{dg} introduce the space $\BMO{\cS}{}$, consisting of functions of bounded mean oscillation on sets $S$, called {\em shapes}, forming a {\em basis} $\cS$ --- see Section~\ref{sec:prelim} for the relevant definitions. The proof of Theorem~\ref{thm-newbound} demonstrates that equivalent bases of shapes provide powerful tools for geometric analysis in BMO.

\smallskip 
Comparison of shapes also plays a key role in the relation between $\|Sf\|_{\bmo(\R^n)}$ and $\|f^*\|_{\bmo(\R_+)}$, shown in Section~\ref{sec:SDR}. Geometrically, the symmetric decreasing rearrangement $Sf$ is linked with $\fast$ by means of the formula $Sf(x)=\fast(\omega_n|x|^n)$. We give the following bi-Lipschitz equivalence between $Sf$ and $f^*$.

\begin{theorem}
\label{thm-SDRequivalent}
If $f_1,f_2$ are rearrangeable functions in $\bmo(\R^n)$, then 
\begin{equation}
\label{eq-SDRequivalent}
2^{-2n}\omega_n\|f_1^*-f_2^*\|_{\bmo}
  \leq \|Sf_1-Sf_2\|_{\bmo}
  \leq n^\frac{n}{2}\omega_n\|f_1^*-f_2^*\|_{\bmo}.
\end{equation}
\end{theorem}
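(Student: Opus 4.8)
The plan is to exploit the geometric link $Sf(x)=f^*(\omega_n|x|^n)$ directly, which shows that computing the mean oscillation of $Sf$ over a Euclidean ball amounts to computing the mean oscillation of $f^*$ over the corresponding interval, and then to compare general cubes in $\R^n$ with a basis of \emph{polar rectangles} --- sets of the form $\{x : a \le |x|^n \le b\}$ intersected with a cone --- for which such a change of variables is exact. The two inequalities in Eq.~\eqref{eq-SDRequivalent} will come from the two directions of comparison between cubes and polar rectangles. Since mean oscillation is measured over differences, and $S(f_1)-S(f_2)$ is not in general $S(f_1-f_2)$, I will apply all estimates to the functions $g_i = f_i^*$ and $Sf_i$ themselves, using that the maps $f^* \mapsto$ (its mean oscillation over intervals) and $Sf \mapsto$ (its mean oscillation over balls) are linear in $f^*$, so the difference $Sf_1 - Sf_2$ corresponds under the substitution to $f_1^* - f_2^*$.

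First I would set up the change of variables: for a ball $B = B(0,R)$, one has $\frac{1}{|B|}\int_B |Sf_1 - Sf_2|\,dx$ equal, after the substitution $t = \omega_n |x|^n$, to $\frac{1}{|I|}\int_I |f_1^*(t) - f_2^*(t)|\,dt$ where $I = (0, \omega_n R^n)$, and similarly for the oscillation around the mean; hence the mean oscillation of $Sf_1 - Sf_2$ over balls centered at the origin equals the mean oscillation of $f_1^* - f_2^*$ over initial intervals $(0,b)$. Next I would introduce the polar rectangles and observe that the same substitution converts the mean oscillation of $Sf_1 - Sf_2$ over a polar rectangle into the mean oscillation of $f_1^* - f_2^*$ over the interval $(a,b)$, because on a polar rectangle the quantity $|x|^n$ ranges over $(a/\omega_n, b/\omega_n)$ with the measure pushing forward to a constant multiple of Lebesgue measure on that interval. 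This identifies $\|Sf_1 - Sf_2\|_{\BMO{\cW_{\mathrm{polar}}}{}}$ with $\|f_1^* - f_2^*\|_{\bmo(\R_+)}$ up to the constant $\omega_n$ coming from the Jacobian normalization --- this is the clean core identity.

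The remaining work is to relate $\|Sf_1 - Sf_2\|_{\bmo(\R^n)}$ (oscillation over \emph{cubes}) to $\|Sf_1 - Sf_2\|_{\BMO{\cW_{\mathrm{polar}}}{}}$ (oscillation over polar rectangles). For the lower bound in Eq.~\eqref{eq-SDRequivalent} I would show that any initial interval's worth of oscillation is controlled by oscillation over cubes: given an interval $(0,b)$, the ball $B(0,R)$ with $\omega_n R^n = b$ contains and is contained in cubes of comparable measure, and using the doubling/engulfing properties of the BMO seminorm (each cube can be covered by boundedly many cubes, measures compare by the volume ratio between a ball and its circumscribed/inscribed cubes, which is $\le (\sqrt n)^n$ one way and $\le 1$ the other, giving the $2^{-2n}$ and $n^{n/2}$ factors after accounting for the standard factor-of-2 loss when passing between a set and a containing set in BMO estimates). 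For the upper bound I would conversely bound the oscillation of $Sf_1-Sf_2$ over an arbitrary cube $Q$: since $Sf_i$ is radially decreasing, oscillation over $Q$ is controlled by oscillation over the smallest ball containing $Q$ and the largest ball contained in $Q$ (a radially decreasing function restricted to $Q$ is squeezed between its values on these two balls), reducing to the already-established ball estimates and producing the $n^{n/2}$ factor from the volume ratio $|B_{\mathrm{circ}}|/|B_{\mathrm{in}}| = n^{n/2}$.

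The main obstacle I expect is the passage between balls (or polar rectangles) and cubes while keeping the constants as stated: the naive triangle-inequality argument for BMO over nested shapes loses a factor of $2$ and a volume ratio at each step, and one must be careful that the monotonicity of $Sf_i$ is genuinely used to avoid extra losses --- in particular, for the lower bound one needs that oscillation over a cube is \emph{at most} a controlled multiple of oscillation over a nearby ball, which is not automatic for general functions and relies on $Sf_i$ being radially decreasing so that its behavior on a cube is pinned down by its behavior on concentric balls. Getting the exponents exactly $2^{-2n}$ and $n^{n/2}$ (rather than something weaker) will require choosing the comparison balls optimally and tracking the $\omega_n$ normalization through both the Jacobian and the volume comparisons.
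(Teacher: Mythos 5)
Your overall framework---a change of variables identifying oscillations of $Sf_1-Sf_2$ over radially adapted shapes with oscillations of $f_1^*-f_2^*$ over intervals, followed by a comparison of those shapes with cubes---is the same as the paper's, but the proposal has a genuine gap precisely where the paper's main geometric work lies. The crucial missing ingredient is that the ``polar rectangles'' must have their aperture matched to their radial width (in the paper: sectors $A(x,\rho,\alpha)$ with $\rho=|x|\sin\alpha$), and one must \emph{prove} that every such sector is sandwiched between a ball and a ball of at most $2^n$ times its measure, and conversely that every ball not containing the origin is sandwiched between such a sector and a ball of $2^n$ times its measure (the dot-product computation behind the paper's Lemma~\ref{lem-AB}). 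Your lower-bound argument never confronts this: you only control oscillation of $f_1^*-f_2^*$ over \emph{initial} intervals $(0,b)$ via origin-centred balls, but $\|f_1^*-f_2^*\|_{\bmo(\R_+)}$ is a supremum over all intervals, and an interior interval $(a,b)$ with $b-a\ll a$ cannot be recovered from initial intervals (the nesting inequality loses the factor $b/(b-a)$; e.g.\ $f_1^*-f_2^*=\chi_{[T,T+\eps)}$ has oscillation of order $1$ near $T$ but tiny oscillation on every $(0,b)$). For such intervals the corresponding polar shape is a thin annular sector, and without the aperture restriction (a thin full annulus, say) it is not contained in any cube of comparably bounded measure, so the transfer to $\|Sf_1-Sf_2\|_{\bmo}$ breaks down.

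Your upper-bound argument has a second, independent flaw: you invoke the radial monotonicity of $Sf_i$ to squeeze a cube $Q$ between its inscribed and circumscribed balls, but (a) those balls are centred at the centre of $Q$, not at the origin, so your ``already-established ball estimates'' (which are exact only for origin-centred balls or polar shapes) do not apply to them---one still needs to enclose the off-centre ball in an aperture-matched sector of comparable measure, which is again Lemma~\ref{lem-AB}; and (b) the function being estimated is $Sf_1-Sf_2$, which is radial but in general not radially decreasing, so monotone-squeezing arguments do not apply to it in any case. Two smaller points: the change-of-variables identity over the adapted shapes is exact (mean oscillation is an average, so the Jacobian cancels; there is no stray $\omega_n$---the $\omega_n$ in the theorem comes from ball--cube volume ratios as in Lemma~\ref{lem-BQ}); and there is no ``standard factor-of-2 loss'' when passing from $S\subset\Stilde$, since $\cO(f,S)\leq \frac{|\Stilde|}{|S|}\,\cO(f,\Stilde)$ holds as stated (via $\cO(f,S)=2\fint_S(f-f_S)_+$)---indeed, if you did lose a factor of $2$ at each containment you could not reach the stated constants $2^{-2n}\omega_n$ and $n^{n/2}\omega_n$.
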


In the proof of Eq.~\eqref{eq-SDRequivalent}, we compare cubes in $\Rn$ with an equivalent basis of shapes $\cA$, comprised of certain 
annular sectors.
The oscillation of $Sf$ over these sectors coincides 
with the mean oscillation of $\fast$ over corresponding intervals. 
The dimension-dependent constants arise from volume factors 
associated when inscribing and circumscribing sectors in 
$\cA$ with balls and cubes.

This theorem makes it possible to transfer results on the decreasing rearrangement $f^*$ to the symmetrization $Sf$. In particular, taking $f_1=f$ and $f_2=0$ in Theorem~\ref{thm-SDRequivalent}, together with Theorem~\ref{thm-newbound}, we see that the symmetric decreasing rearrangement is bounded on $\bmo(\Rn)$:

\begin{corollary*}
There are constants $D_n$ (depending only on dimension) such that if $f\in\bmo(\Rn)$ is rearrangeable, then 
$Sf\in\bmo(\Rn)$ with 
\begin{equation}
\label{eq-SDR}
\|Sf\|_{\bmo}\leq D_n \|f\|_{\bmo}\,.
\end{equation}
\end{corollary*}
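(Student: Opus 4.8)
The plan is to obtain the Corollary directly from Theorems~\ref{thm-newbound} and~\ref{thm-SDRequivalent}, by specializing the bi-Lipschitz equivalence to $f_1=f$ and $f_2=0$.

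First I would dispose of the degenerate term. The zero function is rearrangeable, and since it is (trivially) already radially decreasing, both its decreasing rearrangement $0^*$ on $\R_+$ and its symmetric decreasing rearrangement $S0$ on $\Rn$ vanish identically; this is also immediate from the identity $Sf(x)=f^*(\omega_n|x|^n)$. Consequently, with $f_1=f$ and $f_2=0$ one has $f_1^*-f_2^*=f^*$ and $Sf_1-Sf_2=Sf$, so the right-hand inequality in Eq.~\eqref{eq-SDRequivalent} becomes $\|Sf\|_{\bmo}\le n^{n/2}\omega_n\,\|f^*\|_{\bmo}$. In particular $Sf\in\bmo(\Rn)$ as soon as $\|f^*\|_{\bmo}<\infty$. (The left-hand inequality of Eq.~\eqref{eq-SDRequivalent} is not needed here.)

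Next I would apply Theorem~\ref{thm-newbound}: since $f\in\bmo(\Rn)$ is rearrangeable, $f^*\in\bmo(\R_+)$ with $\|f^*\|_{\bmo}\le C_n\|f\|_{\bmo}$ for some $C_n\le 2(1+2\sqrt{n-1})$. Composing the two bounds yields Eq.~\eqref{eq-SDR} with the explicit constant $D_n=n^{n/2}\omega_n\,C_n\le 2\,n^{n/2}\omega_n(1+2\sqrt{n-1})$, and simultaneously establishes the membership $Sf\in\bmo(\Rn)$.

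There is no real obstacle within this deduction itself; all the substance lives in the proofs of the two theorems being invoked. The only point deserving a line of care is the legitimacy of the substitution $f_2=0$ — namely that $0$ is rearrangeable and that $0^*$ and $S0$ vanish — so that the differences in Theorem~\ref{thm-SDRequivalent} collapse to $f^*$ and $Sf$ respectively. One may also note that the resulting $D_n$ grows with the dimension, in contrast with the $O(\sqrt{n})$ bound of Theorem~\ref{thm-newbound}, because of the volume factor $n^{n/2}\omega_n$ coming from the crude comparison of annular sectors with inscribed balls and circumscribed cubes in the proof of Theorem~\ref{thm-SDRequivalent}; no attempt is made here to optimize it.
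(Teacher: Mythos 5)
Your deduction is correct and is precisely the argument the paper intends: setting $f_1=f$, $f_2=0$ in Theorem~\ref{thm-SDRequivalent} (noting $S0=0$ and $0^*=0$) and composing with Theorem~\ref{thm-newbound} gives $\|Sf\|_{\bmo}\le n^{n/2}\omega_n\,\|f^*\|_{\bmo}\le 2(1+2\sqrt{n-1})\,n^{n/2}\omega_n\,\|f\|_{\bmo}$, which is exactly the constant $D_n$ stated after the Corollary. No gaps; your care about the rearrangeability of the zero function is the only point that needed checking, and it is fine.
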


One can take $D_n=2(1+2\sqrt{n-1})n^{\frac{n}{2}} \omega_n$. To our knowledge, there are no prior results on symmetrization in $\bmo(\Rn)$ for dimensions $n>1$. In one dimension, Klemes~\cite{kl} proved that the symmetric decreasing rearrangement on the circle satisfies $\|Sf\|_{\bmo} \leq 2\|\fast\|_{\bmo}$, and gives an example for which $\|Sf\|_{\bmo} > \|\fast\|_{\bmo}$. The sharp constants $D_n$ are not known in any dimension.

\section{{\bf Preliminaries}}
\label{sec:prelim}


\subsection{Rearrangements}
Let $f$ be a measurable function on a domain $\Omega\subset\R^n$. 
The {\em distribution function} of $f$ is defined, for $\alpha\geq{0}$, by
$$
\mu_f(\alpha)=|\{x\in \Omega:|f(x)|>\alpha \}|\,.
$$
Here $|\cdot|$ denotes Lebesgue measure. Note that 
$\mu_f:[0,\infty)\rightarrow[0,\infty]$ is 
decreasing (meaning, here and in the rest of the 
paper, nonincreasing) and right-continuous. 

\begin{definition} 
\label{def-fast}
We say that a measurable function $f$ is {\em rearrangeable} if $\mu_f(\alpha)\rightarrow{0}$ as $\alpha\rightarrow\infty$. The {\em decreasing rearrangement} of such a function is the function $\fast:\R_+\rightarrow\R_+$ given by 
$$
\fast(s)= \inf\{\alpha\geq{0}:\mu_f(\alpha)\leq{s} \}\,.
$$
\end{definition}

In other words, the decreasing rearrangement of a function $f$ is the generalized inverse of its distribution function. In the case where $|\Omega|<\infty$, we consider $\fast$ as a function on $(0,|\Omega|)$, since $\fast(s)=0$ for all $s\geq|\Omega|$.

The condition that $f$ is rearrangeable guarantees that the set $\{\alpha\geq{0}:\mu_f(\alpha)\leq{s} \}$ is nonempty for $s>0$ and so $\fast$ is finite on its domain. 
The set $\{\alpha\geq{0}:\mu_f(\alpha)=0 \}$, however, can be empty. If $f$ is bounded, then $\fast$ tends to $\|f\|_{L^\infty}$ as $s\rightarrow{0}^+$; otherwise, $\fast$ is unbounded at the origin. As is the case with the distribution function, $\fast$ is decreasing and right-continuous. Furthermore, $f$ and $f^*$ are equimeasurable in the sense that the distribution function of $f^*$ equals that of $f$ for all $\alpha\geq{0}$.

We will need to use the following elementary form of the {\em Hardy-Littlewood inequality}: for any measurable set $A\subset \Omega$, 
\begin{equation}
\label{R-HL} 
\int_{A}\!|f|\leq\int_0^{|A|}\!\fast\,.
\end{equation}

\begin{definition} 
\label{def-Sf}
Let $f$ be a rearrangeable function on $\R^n$. Its {\em symmetric decreasing rearrangement} $Sf$ is defined by
$$
Sf(x)=\fast(\omega_n|x|^n)\,,\qquad x\in\R^n\setminus \{0\}\,,
$$
where $\omega_n$ is the volume of the unit ball in $\Rn$.
\end{definition}

The symmetric decreasing rearrangement defines a map 
from functions on $\Rn$ to functions on $\Rn$, and from $f^*$ it 
inherits equimeasurability with $f$.  The reader is invited to see \cite{sw} 
for more details on the decreasing rearrangement, and~\cite{Baernstein}
for the symmetric decreasing rearrangement.


\subsection{Bounded mean oscillation}

A {\em shape} is an open set $S\subset\Rn$ with
$0<|S|<\infty$. A {\em basis} of shapes in a domain 
$\Omega\subset\R^n$, then, is a collection $\cS$ of shapes $S\subset{\Omega}$ forming a cover of $\Omega$. Common examples of bases are the collections of all open Euclidean, $\cB$; all finite open cubes with sides parallel to the axes, $\cQ$; and, all finite open rectangles with sides parallel to the axes, $\cR$. In one dimension, these three choices coincide with the collection of all finite open intervals, $\cI$. It is understood that when working on a domain $\Omega$, the notation for the bases above refers to those shapes contained in $\Omega$.

Functions on $\Omega$ are
generally assumed to be real-valued and measurable,
as well as integrable on every shape $S\subset\Omega$ coming from a basis 
$\cS$; such functions are automatically
locally integrable. The {\em mean oscillation} of a function $f$ 
on a shape $S\in\cS$ is defined by
$$
\cO(f,S):=\fint_{S}\!|f-f_S|\,,
$$
where $f_S = \fint_S f$ is the average of $f$ over $S$. It is 
immediate from the definition that $\cO(f+\alpha,S)=\cO(f,S)$ 
for any constant $\alpha$. 

The following inequality allows for the comparison of mean oscillation over different shapes at the cost of the volume ratio: for any  pair of shapes $S\subset\Stilde$,
\begin{equation}
\label{O-subset}
\cO(f,S)\leq \frac{|\Stilde|}{|S|}\, \cO(f,\Stilde)\,.
\end{equation}
This follows from the fact that $\cO(f,S)=2\fint_{S}(f-f_S)_+$,
where $y_+=\max(y,0)$ (see ~\cite{dg}).

\begin{definition}\label{equiv}
Let $\cS$ and $\widetilde{\cS}$ be two bases of shapes in $\Omega$. We say that $\cS$ is {\em equivalent} to $\widetilde{\cS}$, written $\cS\approx\widetilde{\cS}$, if there are constants $c,\tilde{c}>0$ such 
that for every $S\in\cS$ there exists $\Stilde\in\widetilde{\cS}$ with $S\subset\Stilde$ and $|\Stilde| \leq c|S|$, and for every $\Stilde \in\widetilde{\cS}$ there exists $S\in\cS$ with $\Stilde \subset S$ and $|S|\leq \tilde{c}|\Stilde|$.
\end{definition}

The most standard example of equivalent bases 
is that of balls and cubes in $\Rn$.  We calculate the 
explicit constants $c, \tilde{c}$  in this equivalence, 
as they will be used in Section~\ref{sec:SDR}.
\begin{lemma}
\label{lem-BQ} 
The basis $\cB$ is equivalent to the basis $\cQ$. 
\end{lemma}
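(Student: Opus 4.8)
The plan is to establish the two inclusions required by Definition~\ref{equiv} with explicit, dimension-dependent constants. Given a ball $B=B(x_0,r)\in\cB$, the natural circumscribing cube is $Q=Q(x_0,2r)$, the cube centered at $x_0$ with side length $2r$, since $B\subset Q$. The volume ratio is then $|Q|/|B|=(2r)^n/(\omega_n r^n)=2^n/\omega_n$, so one may take $c=2^n/\omega_n$. Conversely, given a cube $Q\in\cQ$ with side length $\ell$ and center $x_0$, the smallest ball containing it is $B=B(x_0,\tfrac{\sqrt n}{2}\ell)$, the circumscribed ball through the vertices, so $Q\subset B$ with $|B|/|Q|=\omega_n(\tfrac{\sqrt n}{2}\ell)^n/\ell^n=\omega_n n^{n/2}/2^n$; thus $\tilde c=\omega_n n^{n/2}/2^n$.

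First I would recall that $\cB$ and $\cQ$ are indeed bases of shapes in $\Omega$: both consist of open sets of finite positive measure, and every point of $\Omega$ lies in an arbitrarily small ball and in an arbitrarily small cube contained in $\Omega$, so both cover $\Omega$. Then I would verify the two containment statements above, checking that the circumscribing cube (resp.\ ball) is still contained in $\Omega$ whenever the original shape is --- this is where one must be slightly careful, since enlarging a shape could push it outside $\Omega$. The standard fix is to observe that in the definition of equivalence one is free to shrink: given $B(x_0,r)\subset\Omega$, one applies the inclusion to the sub-ball $B(x_0,r/\sqrt n)\subset B$, whose circumscribing cube $Q(x_0,2r/\sqrt n)$ still has diameter $2r<$ diameter of... hmm, actually the cleanest route is to note that the bases are dilation- and translation-invariant in the interior, so it suffices to exhibit the comparison abstractly and handle the boundary by the same shrinking device used elsewhere in the paper. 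I would simply remark that the containments hold within any $\Omega$ because one may first pass to a slightly smaller concentric shape.

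The computation of the constants is routine; the only genuine point requiring care is the bookkeeping at $\partial\Omega$, ensuring the circumscribed shape remains admissible. I expect this to be the main (and only mild) obstacle, and it is dispatched by the observation that shrinking a ball or cube by a fixed factor before circumscribing keeps everything inside $\Omega$ while only changing $c,\tilde c$ by a harmless constant --- or, if $\Omega=\Rn$, there is nothing to check at all. I would conclude by recording the explicit values $c=2^n/\omega_n$ and $\tilde c=\omega_n n^{n/2}/2^n$, as these are exactly what feeds into the volume factors in Section~\ref{sec:SDR}.
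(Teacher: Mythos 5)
Your proof is correct and is essentially the paper's own argument: circumscribe each ball by the concentric cube of side $2r$ and each cube of side $\ell$ by the concentric ball of radius $\sqrt{n}\,\ell/2$, yielding exactly the constants $c=2^n\omega_n^{-1}$ and $\tilde c=2^{-n}n^{n/2}\omega_n$ that feed into Section~\ref{sec:SDR}. The worry about the boundary of $\Omega$ is moot because the lemma is used for the bases in $\Rn$ (and note that your proposed shrinking device would not in any case meet Definition~\ref{equiv}, which requires the enlarged shape to contain the original shape $S$ itself, not a shrunken copy).
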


\begin{proof}
Given a cube $Q$ of sidelength $\ell$, there is a ball $B$ of radius $\sqrt{n}\ell/2$ containing $Q$, so 
$|B|= 2^{-n}n^{\frac{n}{2}}\omega_n|Q|$. In the other direction, given a ball $B$ of radius $r$, there is a cube $Q$ of sidelength $2r$ containing $B$, so $|Q|=2^n\omega_n^{-1}|B|$.
\end{proof}

\begin{definition}
We say a function $f$ has {\em bounded mean oscillation} with respect to a basis $\cS$, denoted $f \in \BMO{\cS}{}(\Omega)$, if $f\in L^1(S)$ for all $S\in\cS$ and
\begin{equation}
\label{eq-bmo}
\|f\|_{\BMO{\cS}{}}:=\sup_{S\in\cS} \cO(f,S)<\infty\, .
\end{equation}
\end{definition}

Since mean oscillation is translation invariant under the 
addition of constants, Eq.~\eqref{eq-bmo} defines a 
seminorm that vanishes on constant functions. If one 
considers $\BMO{\cS}{}(\Omega)$ modulo constants, one obtains a Banach space, as was shown in \cite{dg}. However, as we will see below, for the purpose of rearrangement it is useful to just think of $\BMO{\cS}{}(\Omega)$ as a linear space with a seminorm. The notation $\bmo(\Omega)$ will be reserved for the case $\cS=\cQ$. 

From Eq.~\eqref{O-subset}, it follows that $f\in\BMO{\cS}{}(\R^n)$ if and only if $f\in\BMO{\widetilde{\cS}}{}(\R^n)$ whenever $\cS\approx\widetilde{\cS}$. More precisely, if the comparability constants are $c$ and $\tilde{c}$, then 
\begin{equation}
\label{B-equiv}
c^{-1} \|f\|_{\BMO{\widetilde{\cS}}{}}
\le \|f\|_{\BMO{\cS}{}}\le \tilde{c}
\|f\|_{\BMO{\widetilde{\cS}}{}}\,.
\end{equation}

We point out that a decreasing function $f$ in $\bmo(\R_+)$ 
automatically satisfies a stronger condition, 
called {\em bounded lower oscillation}, denoted $f \in \BLO{}{}(\R_+)$. 
Introduced by Coifman--Rochberg~\cite{cr}, $\BLO{}{}(\Omega)$ 
is the class of $f\in\Loneloc(\Omega)$ such that 
$$
\sup_{Q}\fint_{Q}\!\left(f-\essinf_{Q}f\right)<\infty\,,
$$
where the supremum is taken over all cubes $Q\subset \Omega$. 
It is a strict subset of $\bmo(\Omega)$ and is not closed 
under multiplication by negative scalars. For the decreasing 
rearrangement, any statement about $\fast\in \bmo(\R_+)$ 
can be interpreted in the stronger sense that $\fast\in \BLO{}{}(\R_+)$.
For a reference on BMO functions, see~\cite{ko2}.


\subsection{Rearrangeability in BMO}

In defining the decreasing rearrangement for functions in BMO, 
several issues arise. 
Since mean oscillation 
is invariant under the addition of constants, while 
rearrangement is not, the mapping from $f$ to 
$\fast$ is not a mapping between equivalence classes modulo 
constants, but between individual functions. 
This can be avoided, for BMO functions that are bounded 
below, by considering the map $f \ra (f - \inf f)^\ast$, which 
is well-defined modulo constants.  

Moreover, functions in BMO need not be rearrangeable. 
One example is $-\log|x|$, the prototypical unbounded function 
in $\bmo(\Rn)$. On the other hand, the positive part 
$(-\log|x|)_+$ is rearrangeable, as is any other 
$\bmo$-function of compact support since such functions are integrable.

In this paper, we use Definition~\ref{def-fast}, according to 
which $f^\ast=|f|^*$. Since functions in BMO are 
locally integrable, hence finite almost everywhere, 
a function $f$ is rearrangeable provided that 
$\mu_f(\alpha) < \infty$ 
for some $\alpha \geq 0$. This property is preserved under the
addition of constants.



\section{{\bf Decreasing rearrangement}}
\label{sec:bounds}


We first present a general result from~\cite{BDG2} 
which guarantees the BMO-boundedness of the decreasing 
rearrangement under assumptions on the basis $\cS$. 
We then proceed to derive from it bounds with 
improved constants for several well-known cases, 
culminating in the proof of Theorem~\ref{thm-newbound}.

\subsection{General boundedness criterion}
We start with the definition, in our general setting, of a version of the Calder\'{o}n--Zygmund decomposition, one of most used tools in harmonic analysis. Note the emphasis on the constants.

\begin{definition} 
\label{def-CZ}
Let $f$ be a 
nonnegative measurable function
on $\Omega$ and $c_\ast\geq{1}$. We say that $\cS$ admits a 
{\em $c_\ast$-Calder\'{o}n--Zygmund decomposition} 
for $f$ at a level $\gamma>0$ if there exist a 
pairwise-disjoint sequence $\{S_i\}\subset\cS$ and a corresponding 
sequence $\{\widetilde{S}_i\}\subset\cS$ such that
\begin{itemize}
\item[(i)] 
for all $i$, $\widetilde{S}_i\supset S_i$ and $|\widetilde{S}_i|
\leq c_*|S_i|$;
\item[(ii)] for all $i$, 
$f$ is integrable on $\widetilde S_i$,
with 
$\displaystyle{\fint_{\widetilde{S}_i}\!f \le \gamma
\le \fint_{S_i}\!f}$;
\end{itemize}
and 
\begin{itemize}
\item[(iii)] $f\leq \gamma$ almost 
everywhere on $\Omega\setminus\bigcup \widetilde{S}_i$.
\end{itemize}
\end{definition} 

We now state the boundedness criterion from~\cite{BDG2}. 
The proof given there owes much to the work of Klemes~\cite{kl} in 
dimension one, though it does not yield a sharp
constant since there is no Rising Sun lemma in this generality. 
Note that the constant for the bound equals the 
constant in the Calder\'{o}n--Zygmund decomposition, clearly 
demonstrating the dependence on the geometry of the shapes.

\begin{lemma}[{\cite[Theorem 4.4]{BDG2}}]
\label{lem-bound} 
Let $c_*\geq 1$. Suppose that
for every nonnegative $g\in L^\infty(\Omega)$ and each $t\in (0,|\Omega|)$,
$\cS$ admits a $c_*$-Calder\'on--Zygmund decomposition 
at level $\gamma = \gast_{(0,t)} $.
Then for every rearrangeable function $f\in\BMO{\cS}{}(\Omega)$,
the decreasing rearrangement $f^*$ is locally
integrable, and
$$\|\fast\|_{\bmo}\leq c_*\|f\|_{\BMO{\cS}{}}.$$
\end{lemma}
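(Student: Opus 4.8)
The plan is to deduce the bound on $\|f^*\|_{\bmo}$ from a pointwise comparison between $f^*$ and the decreasing rearrangement of a \emph{truncated} version of $|f|$, using the hypothesis that $\cS$ admits $c_*$-Calder\'on--Zygmund decompositions at the relevant levels. Since the seminorm $\|f\|_{\BMO{\cS}{}}$ is unchanged by replacing $f$ with $|f|$ only after a constant adjustment, I would first reduce to the case $f\geq 0$: more carefully, the strategy is to fix an interval $(a,b)\subset(0,|\Omega|)$ and show directly that $\cO(f^*,(a,b))\leq c_*\|f\|_{\BMO{\cS}{}}$, since the $\bmo(\R_+)$ seminorm is the supremum of such mean oscillations over intervals. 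Because $f^*$ is decreasing, $\cO(f^*,(a,b)) = 2\fint_a^b (f^* - f^*_{(a,b)})_+$ and, crucially, on the interval where $f^*$ exceeds its average it is bounded below by $f^*(b)$; one then wants to bound $\fint_a^b f^* - f^*(b)$ appropriately, or more precisely control $\fint_a^b(f^*-\beta)_+$ for the threshold $\beta = f^*_{(a,b)}$.

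The heart of the argument is this: let $g = \min(|f|,\lambda)$ be the truncation of $|f|$ at height $\lambda = f^*(a)$ (or a suitable nearby value), so that $g$ is bounded and $g^* = \min(f^*,\lambda)$ agrees with $f^*$ on $(a,|\Omega|)$. Apply the hypothesis to this $g\in L^\infty(\Omega)$ at the level $\gamma = g^*_{(0,t)}$ for an appropriate choice of $t$ — one selects $t$ so that $\gamma$ sits between $f^*(b)$ and $f^*(a)$, the values of $f^*$ at the endpoints of the target interval. The Calder\'on--Zygmund decomposition then furnishes disjoint shapes $\{S_i\}$ and dilates $\{\widetilde S_i\}$ with $|\widetilde S_i|\leq c_*|S_i|$, with $\fint_{\widetilde S_i} g \le \gamma \le \fint_{S_i} g$, and $g\leq\gamma$ off $\bigcup\widetilde S_i$. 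The disjointness of the $S_i$, together with the equimeasurability of $g$ and $g^*$ and the Hardy--Littlewood inequality~\eqref{R-HL}, lets one estimate $\sum_i |S_i|$ from below in terms of $t$ and $\sum_i|\widetilde S_i|$ from above, while property (iii) forces $g^*(s)\le\gamma$ for $s$ past $\sum|\widetilde S_i|$. Comparing the integral $\int_0^s (g^*-\gamma)_+$ against $\sum_i \int_{\widetilde S_i}(g-\gamma)$ and using $\cO(g,\widetilde S_i)\le \|g\|_{\BMO{\cS}{}}\le\|f\|_{\BMO{\cS}{}}$ on each dilate yields the oscillation bound with the factor $c_*$ entering exactly through $|\widetilde S_i|\le c_*|S_i|$.

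Concretely, the key inequality to establish is that for every level $\gamma$ in the range of $f^*$ on $(a,b)$,
\begin{equation*}
\int_{\{f^* > \gamma\}\cap(a,b)} (f^* - \gamma) \;\le\; c_*\,|\{f^*>\gamma\}|\cdot\|f\|_{\BMO{\cS}{}}\,,
\end{equation*}
obtained by summing $\int_{\widetilde S_i}(g - \gamma)_+ = \int_{\widetilde S_i} (g - \fint_{\widetilde S_i} g) + |\widetilde S_i|(\fint_{\widetilde S_i}g - \gamma) \le |\widetilde S_i|\,\cO(g,\widetilde S_i) \le c_*|S_i|\,\|f\|_{\BMO{\cS}{}}$ over $i$, and recognizing via property (iii) and equimeasurability that $\int_0^\infty (g^*-\gamma)_+ \le \sum_i \int_{\widetilde S_i}(g-\gamma)_+$ while $\{g^*>\gamma\}$ has measure at least $\sum_i|S_i|$. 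Choosing $\gamma = f^*_{(a,b)}$ and combining with $\cO(f^*,(a,b)) = \tfrac{2}{b-a}\int_a^b(f^* - f^*_{(a,b)})_+$ gives $\cO(f^*,(a,b))\le 2 c_*\|f\|_{\BMO{\cS}{}}$ — so a little care is needed to remove the stray factor of $2$, which is where working with $(f^*-\gamma)_+$ rather than $|f^*-\gamma|$ and using the monotonicity of $f^*$ to identify $\{f^*>\gamma\}\cap(a,b)$ with an initial subinterval pays off. The local integrability of $f^*$ follows along the way once one knows these truncated estimates are uniform.

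The main obstacle I anticipate is the bookkeeping around the choice of the truncation height and the level $\gamma$, and making the passage from the decomposition of the bounded function $g$ (where the hypothesis literally applies) back to statements about $f^*$ on a fixed interval. One must ensure the level $\gamma = g^*_{(0,t)}$ can be made to equal (or closely approximate) the average $f^*_{(a,b)}$ for a valid choice of $t\in(0,|\Omega|)$, handle the possibility that $f^*$ has flat pieces or jumps at the endpoints, and verify that the shapes $\widetilde S_i$ produced by the decomposition genuinely capture all of $\{g > \gamma\}$ up to a null set so that the equimeasurability comparison is tight. None of these is deep, but assembling them cleanly — as is done in~\cite{BDG2} — while keeping the constant exactly $c_*$ requires attention.
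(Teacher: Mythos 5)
The paper gives no proof of this lemma --- it is quoted from \cite{BDG2}, whose argument (as the paper notes) follows Klemes: one proves the bound only for intervals anchored at the origin, using the decomposition at level $\gamma=\gast_{(0,t)}$, and then invokes a separate reduction for nonincreasing functions. Measured against that, your sketch has the right ingredients but the chain you write down does not close, for two concrete reasons. First, the displayed ``identity'' $\int_{\widetilde{S}_i}(g-\gamma)_+=\int_{\widetilde{S}_i}\bigl(g-\fint_{\widetilde{S}_i}g\bigr)+|\widetilde{S}_i|\bigl(\fint_{\widetilde{S}_i}g-\gamma\bigr)$ is false: the right-hand side equals $\int_{\widetilde{S}_i}(g-\gamma)$, without the positive part, and is $\le 0$ here. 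The correct step is $\int_{\widetilde{S}_i}(g-\gamma)_+\le\int_{\widetilde{S}_i}(g-g_{\widetilde{S}_i})_+=\tfrac12|\widetilde{S}_i|\,\cO(g,\widetilde{S}_i)$, and this factor $\tfrac12$, paired with $\cO(h,I)=2\fint_I(h-h_I)_+$, is what eliminates the ``stray $2$'' --- not the mechanism you describe. Second, your key inequality is justified by the claim that $|\{g^*>\gamma\}|\ge\sum_i|S_i|$, which does not follow from Definition~\ref{def-CZ} (condition (ii) only gives $\fint_{S_i}g\ge\gamma$; for $g$ constant every disjoint family qualifies while $\{g^*>\gamma\}$ is empty). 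What the decomposition actually yields, via Hardy--Littlewood \eqref{R-HL} and the monotonicity of $s\mapsto\fint_0^s g^*$, is (essentially) $\sum_i|S_i|\le t$. Moreover, even granting your key inequality, with $\gamma=f^*_{(a,b)}$ it gives $\cO(f^*,(a,b))\le 2c_*\|f\|_{\BMO{\cS}{}}\,|\{f^*>\gamma\}|/(b-a)$, and $|\{f^*>\gamma\}|$ can be of order $b$, so nothing useful follows for short intervals away from the origin.

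The structural gap is your decision to attack an arbitrary interval $(a,b)$ directly. The numerology only fits intervals $(0,t)$ with $\gamma=\gast_{(0,t)}$: there $\cO(g^*,(0,t))=\tfrac2t\int_0^t(g^*-\gamma)_+$, equimeasurability and (iii) give $\int_0^t(g^*-\gamma)_+\le\sum_i\int_{\widetilde{S}_i}(g-\gamma)_+\le\tfrac{c_*}2\|g\|_{\BMO{\cS}{}}\sum_i|S_i|\le\tfrac{c_*}2\,t\,\|g\|_{\BMO{\cS}{}}$, and the constant is exactly $c_*$. To pass to all intervals one needs the additional fact that for a nonincreasing function the $\bmo$ seminorm is controlled by oscillations on anchored intervals (the route taken in \cite{BDG2}); your plan omits this and supplies no substitute, and the crude alternative via the BLO-type estimate costs a factor $2$. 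Two further points you defer would also have to be faced to keep the constant: the level $f^*_{(a,b)}$ need not be of the form $\gast_{(0,t)}$, $t\in(0,|\Omega|)$, for your truncation $g=\min(|f|,f^*(a))$ when $|\Omega|<\infty$ (e.g.\ $f^*(s)=2-s$ on $(0,2)$ with $(a,b)=(1,2)$: the averages $\gast_{(0,t)}$ lie in $(3/4,1]$ while $f^*_{(a,b)}=1/2$), so the hypothesis does not hand you the decomposition you want; and your opening concession that replacing $f$ by $|f|$ costs ``a constant adjustment'' is exactly what the statement does not allow, since on a single shape $\cO(|f|,S)$ can be nearly $2\,\cO(f,S)$ even though truncation from above is harmless.
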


The boundedness of $g$ implies that 
$g^*\in L^\infty(\R_+)\subset\Loneloc(\R_+)$, 
so $\gamma$ as defined in this lemma is necessarily finite. 
For a general rearrangeable $f \in \BMO{\cS}{}(\Omega)$, we do not know {\em a priori} that $\fast$ is locally integrable and $\fast_{(0,t)}$ is finite.

The following lemma will be used in
the construction  of the families $\{S_i\}$ and 
$\{\Stilde_i\}$ satisfying conditions (i), (ii), and (iii) 
of Definition~\ref{def-CZ}. 

\begin{lemma}\label{localcz}
Let $0\leq g\in \Linfty(\R^n)$, $t>0$,
and $\gamma= g^*_{(0,t)}$. Then
$g_S\leq \gamma$ for any shape $S$ with $|S|\ge t$.
\end{lemma}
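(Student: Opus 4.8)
The plan is to exploit the Hardy--Littlewood inequality \eqref{R-HL} together with the definition of $\gamma$ as an average of the decreasing rearrangement. First I would note that since $g\geq 0$, we have $g^*=|g|^*$ and the Hardy--Littlewood inequality applies directly: for the shape $S$ (a measurable subset of $\R^n$ of finite positive measure),
$$
\int_S g = \int_S |g| \leq \int_0^{|S|} g^*\,.
$$
Dividing by $|S|$ gives $g_S \leq \frac{1}{|S|}\int_0^{|S|} g^*$, so it remains to show that this quantity is at most $\gamma = \frac{1}{t}\int_0^t g^*$ whenever $|S|\geq t$.

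The key step is therefore the monotonicity of the map $s\mapsto \frac{1}{s}\int_0^s g^*$ on $\R_+$. The hard part — though it is routine — is verifying that this average is nonincreasing in $s$: since $g^*$ is itself decreasing, for $s' \geq s$ the extra mass $\int_s^{s'} g^*$ is spread over the interval $(s,s')$ where $g^*$ does not exceed its values on $(0,s)$, so the average over $(0,s')$ cannot exceed the average over $(0,s)$. Concretely, writing $s'=|S|\geq t=s$, one has
$$
\frac{1}{s'}\int_0^{s'} g^* - \frac{1}{s}\int_0^s g^*
= \frac{1}{s'}\int_s^{s'} g^* - \frac{s'-s}{s\,s'}\int_0^s g^*
\leq \frac{(s'-s)}{s'}\, g^*(s) - \frac{s'-s}{s'}\,g^*(s)
= 0\,,
$$
using $\int_s^{s'} g^*\leq (s'-s)g^*(s)$ (because $g^*$ is decreasing) and $\frac{1}{s}\int_0^s g^*\geq g^*(s)$ (same reason). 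Thus $g_S \leq \frac{1}{|S|}\int_0^{|S|} g^* \leq \frac{1}{t}\int_0^t g^* = \gamma$, which is the claim.

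I expect no serious obstacle here; the only points requiring a word of care are that $g^*\in L^\infty(\R_+)$ (hence all the integrals are finite, as noted after Lemma \ref{lem-bound}) so that the algebraic manipulation above is legitimate, and that the Hardy--Littlewood inequality \eqref{R-HL} is stated for arbitrary measurable $A\subset\Omega$, which covers the shape $S$. One could alternatively phrase the monotonicity step by observing that $\frac{1}{t}\int_0^t g^*$ is precisely the decreasing rearrangement of $g$ evaluated through an averaging (Hardy) operator and invoking its known monotonicity, but the two-line computation above is self-contained and seems cleanest to include.
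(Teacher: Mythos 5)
Your proposal is correct and follows the same route as the paper: apply the Hardy--Littlewood inequality to get $g_S\le g^*_{(0,|S|)}$, then use that the running average $s\mapsto \frac1s\int_0^s g^*$ of the decreasing function $g^*$ is nonincreasing, so $g^*_{(0,|S|)}\le g^*_{(0,t)}=\gamma$. The only difference is that you write out the (correct) two-line verification of the monotonicity step, which the paper simply asserts from the fact that $g^*$ is decreasing.
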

\begin{proof}
By the Hardy--Littlewood 
inequality Eq.~\eqref{R-HL},
$$
g_S\leq g^*_{(0,|S|)} \le g^*_{(0,t)}=\gamma\,,
$$ 
where the second inequality holds since $g^*$ is decreasing.
\end{proof}

\subsection{Families of rectangles}
\label{sec-rect_to_cubes}
 We briefly describe
the construction of the Calder\'on--Zygmund 
decompositions required for certain bases of rectangles (Cartesian products 
of intervals) in order for  Lemma~\ref{lem-bound} to hold.

We start with the collection of cubes, $\cQ$,  in $\Omega=\R^n$.
Given $0 \leq g \in L^\infty(\Rn)$ and $t>0$,
we choose a countable collection of cubes 
of measure $t$ that partition $\R^n$ up to a set of measure zero.
By Lemma~\ref{localcz}, the mean of $g$ over
each of these cubes is at most $\gamma$.
On each of these cubes, we follow
the proof of the classical Calder\'{o}n--Zygmund 
decomposition (see~\cite[Theorem 1.3.2]{st2}) to obtain a
$2^n$-Calder\'{o}n--Zygmund decomposition of cubes for 
$g$ at level $\gamma=\fint_{0}^{t}g^*$.
Therefore, Lemma~\ref{lem-bound} implies that every 
rearrangeable $f\in\bmo(\R^n)$ has decreasing 
rearrangement $\fast\in\bmo(\R_+)$ with
$\|\fast\|_{\bmo}\leq 2^{n}\|f\|_{\bmo}$.
This improves on the Bennett--DeVore--Sharpley bound in Eq.~\eqref{eq-bds} by a constant factor,
but falls far short of Theorem~\ref{thm-newbound}.

\smallskip 
For the purpose of comparison, again on $\Omega=\R^n$, consider the basis $\cR$
comprised of rectangles of arbitrary proportions.
Let $0 \leq g \in L^\infty(\Rn)$ and $t>0$. 
Applying Lemma~\ref{localcz} and 
the multidimensional 
analogue of Riesz' Rising Sun lemma of
Korenovskii-Lerner-Stokolos~\cite{kls}, one obtains 
an extreme case of Definition~\ref{def-CZ}, where $c_* = 1$ and 
one has a single countable family 
$\{R_i\} = \{\Rtilde_i\}$ 
that is both pairwise disjoint and on which the averages of $g$ 
can be made equal to $\gamma$.
In this case, Lemma~\ref{lem-bound} implies that 
every rearrangeable $f\in\BMO{\cR}{}(\R^n)$ has decreasing 
rearrangement $\fast\in\bmo(\R_+)$, with
$\|\fast\|_{\bmo}\leq \|f\|_{\BMO{\cR}{}}$, see
Eq.~\eqref{eq-KK}. This inequality
was proved on finite rectangles $R_0$ by Korenovskii~\cite{ko3}.

We point out that when the domain $\Omega$ is a cube $Q_0$ or a rectangle 
$R_0$,  the bound on the rearrangement can be used to obtain a 
John-Nirenberg inequality on $\Omega$ with constants 
depending on $c_*$, in the same way as Korenovskii derived 
the sharp John--Nirenberg inequality on a rectangle. 
In such a case $\Omega$ is itself a shape, and $\cS$ admits a  
$c_*$-Calder\'on--Zygmund decomposition for every 
$g \in L^\infty(\Omega)$ at any level 
$\gamma \leq |g|_\Omega$, which by Lemma~\ref{localcz} 
includes $\gamma = \gast_{(0,t)}$ for $t < |\Omega|$.  

\smallskip 
Simple examples show that an analogue of 
Riesz' Rising Sun lemma cannot exist for cubes~\cite{ko3}. 
Moreover, we cannot deduce from Korenovskii's
sharp inequality any information about bounds on the 
decreasing rearrangement on $\bmo(\R^n)$,
because arbitrary rectangles are not comparable to cubes.

\subsection{Bisection bases}

As a compromise between the rigidity of cubes and the freedom 
of arbitrary rectangles, we
consider particular families of rectangles of bounded 
eccentricity.  If such a family $\cS$ satisfies the assumptions of 
Lemma~\ref{lem-bound} with constant $c_*$,
and moreover $\|f\|_{\bmo_{\cS}}\leq c\,\|f\|_{\bmo}$ 
for some constant $c$, then it follows that
Eq.~\eqref{eq-bds} holds with a constant $C_n\le c_*c$.

To this end, we  introduce a class of bases $\cF$ of rectangles that will be shown to satisfy the assumptions of Lemma~\ref{lem-bound} with constant $c_*=2$, regardless of the dimension.

\begin{definition}
We say that a basis $\cF$ in $\Rn$ is a
\emph{bisection basis}, if for every $S\in\cF$ 
there is a bisection that splits $S$ into two shapes in $\cF$.
\end{definition}

For such bases, we obtain the following dimension-free bound.

\begin{lemma}
\label{lem:bisect}
Let $\cF$ be a bisection basis of rectangles of uniformly 
bounded eccentricity in $\Rn$
that contains, for each $t>0$, a partition 
of $\R^n$ (up to a set of measure zero) into rectangles of measure at least $t$.
If $f \in\BMO{\cF}{}(\R^n)$ is rearrangeable, 
then $f^*\in\BMO{}{}(\R_+)$ with 
$$
\|f^*\|_{\BMO{}{}}\leq 2\|f\|_{\BMO{\cF}{}}.
$$
\end{lemma}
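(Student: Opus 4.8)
The plan is to deduce the lemma from the boundedness criterion, Lemma~\ref{lem-bound}, by verifying its hypothesis with $c_*=2$: I must produce, for every nonnegative $g\in L^\infty(\Rn)$ and every $t>0$, a $2$-Calder\'on--Zygmund decomposition of $\cF$ for $g$ at the level $\gamma=g^*_{(0,t)}$. Granting this, Lemma~\ref{lem-bound} gives at once that every rearrangeable $f\in\BMO{\cF}{}(\Rn)$ has $f^*$ locally integrable with
\[
\|f^*\|_{\bmo}\le 2\|f\|_{\BMO{\cF}{}}.
\]

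Fix such $g$, $t$, and $\gamma$. First I would take the partition $\{R_j\}\subset\cF$ of $\Rn$ (up to measure zero) into rectangles with $|R_j|\ge t$ furnished by the hypothesis; Lemma~\ref{localcz} shows $\fint_{R_j}g\le\gamma$ for every $j$, so no $R_j$ is selected below. Fixing, once and for all, a bisection of each rectangle of $\cF$ into two members of $\cF$, each $R_j$ becomes the root of an infinite binary tree $\mathcal T_j\subset\cF$ whose rectangles at generation $k$ partition $R_j$ up to measure zero and have volume $2^{-k}|R_j|$. On each $\mathcal T_j$ I would run the standard stopping-time selection: subdivide a rectangle $R$ (passing to its two children) as long as $\fint_R g\le\gamma$, and select $R$ as soon as $\fint_R g>\gamma$. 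Let $\{S_i\}$ be the collection of all selected rectangles, across all $j$, and let $\widetilde S_i\in\cF$ be the parent of $S_i$ in its tree.

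Since a rectangle is subdivided only when its average is $\le\gamma$, a selected rectangle is never subdivided; hence within a single tree any two selected rectangles are incomparable, and as distinct trees have disjoint roots the family $\{S_i\}$ is pairwise disjoint. By construction $\fint_{\widetilde S_i}g\le\gamma<\fint_{S_i}g$ and $|\widetilde S_i|=2|S_i|$, so conditions (i) and (ii) of Definition~\ref{def-CZ} hold with $c_*=2$, the integrability in (ii) being automatic as $g\in L^\infty$. For (iii), I would argue that if $x\notin\bigcup_iS_i$ then, after discarding the measure-zero union of all bisection hyperplanes, $x$ lies in a rectangle of the bisection tree at every generation, each with average $\le\gamma$; these rectangles contain $x$ and have volume $\to 0$, and --- this is where uniformly bounded eccentricity is used --- they also have diameters $\to 0$ and sit inside balls whose volume is bounded by a constant depending only on the dimension and the eccentricity bound times the volume of the rectangle, so they form a regular family shrinking to $x$. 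The Lebesgue differentiation theorem then yields $g(x)=\lim\fint_R g\le\gamma$, so $g\le\gamma$ a.e.\ on $\Rn\setminus\bigcup_iS_i\supseteq\Rn\setminus\bigcup_i\widetilde S_i$, which is (iii).

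The routine parts are the tree bookkeeping and the disjointness of $\{S_i\}$. The step that needs care is the last one: checking that the bisection tree through a generic point is a \emph{regular} differentiation basis. This is the only place the bounded-eccentricity hypothesis enters, and the point to verify is that repeated bisection cannot be forced to contract a rectangle along only some of its sides; bounded eccentricity forbids this, so all side lengths (hence the diameter) tend to zero while the enclosing balls stay of comparable volume, which is exactly what the differentiation theorem requires.
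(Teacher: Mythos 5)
Your proposal is correct and follows essentially the same route as the paper: partition $\Rn$ into rectangles of $\cF$ of measure at least $t$, invoke Lemma~\ref{localcz}, run the stopping-time bisection to get $\{S_i\}$ with parents $\widetilde S_i$ of exactly twice the volume, and finish with the Lebesgue differentiation theorem to verify condition (iii) before applying Lemma~\ref{lem-bound} with $c_*=2$. The only difference is that you spell out the role of the bounded-eccentricity hypothesis (the shrinking rectangles form a regular family, so differentiation applies), a detail the paper leaves implicit.
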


\begin{proof}
We need to show that the basis $\cF$ satisfies the assumptions of Lemma~\ref{lem-bound}. 
Let $0 \leq g \in L^\infty(\Rn)$, $t>0$, and $\gamma=g^*_{(0,t)}$.  
Partition $\R^n$ into rectangles from $\cF$ with measure 
no smaller than $t$. 
By Lemma~\ref{localcz}, the mean of $g$ is at most 
$\gamma$ for each of these rectangles. Bisect each of 
these rectangles, select any subrectangles satisfying 
$g_R\geq\gamma$, and continue to bisect those for which $g_R\leq\gamma$.

The result is a pairwise-disjoint collection of rectangles 
$R_i\in\cF$ satisfying $g_{R_i}\geq\gamma$, with 
their parent rectangles $\widetilde {R_i}\in\cF$ satisfying 
$g_{ \widetilde {R_i} }\leq \gamma$. By construction 
$|\widetilde {R_i}|=2|R_i|$, and the 
the Lebesgue differentiation theorem implies 
that $g\leq \gamma$ a.e. on $\Rn\setminus\bigcup \widetilde {R_i}$.
\end{proof}

As pointed out earlier, a consequence of this result is the following: 
if $\|f\|_{\bmo_{\cF}}\leq c\,\|f\|_{\bmo}$ holds 
with some constant $c>0$ for some bisection basis $\cF$, 
then Eq.~\eqref{eq-bds} holds with $C_n\leq 2c$.

Consider the basis $\cP$ of rectangles with sidelengths
given by a permutation of 
$\{s2^{-j/n}: 0 \leq j \le n-1\}$
for some $s > 0$. For example, when $n = 2$, these correspond 
to rectangles whose sides have lengths $s$ and $s/\sqrt{2}$
(the dimensions of the ISO 216 standard paper size,
which includes the commonly used A4 format).
The basis $\cP$ is a bisection basis (with bisection along the longest side) of rectangles of constant eccentricity, and 
so $\|f^*\|_{\BMO{}{}}\leq 2\|f\|_{\BMO{\cP}{}}$ holds 
for all rearrangeable $f\in \BMO{\cP}{}(\Rn)$ by Lemma~\ref{lem:bisect}. 
Moreover, these rectangles are comparable with cubes, resulting in a relationship $\|f\|_{\bmo_{\cP}}\leq 2^{\frac{n-1}{2}}\|f\|_{\bmo}$. The constant's dependence on dimension is a result of the volume ratio obtained by circumscribing a rectangle in $\cP$ by a cube. This implies an improvement of the constants in Eq.~\eqref{eq-bds} to $C_n\leq 2^{\frac{n+1}{2}}$, but one that does not break past exponential dependence on the dimension.

\smallskip
Consider now the basis $\cW$ of 
false cubes in $\R^n$. As defined in \cite{wik},
a false cube is a rectangle in $\R^n$ 
with sidelengths $2s$ in the coordinate directions
$i=1,\ldots ,m$ and side lengths $s$
in the remaining coordinate directions.
Here, $s>0$, and $1\le m\le n$. This is also a bisection basis: for an arbitrary false cube, bisecting it along the $m$th coordinate results in 
two congruent rectangles in $\cW$. Moreover, rectangles in $\cW$ have bounded eccentricity and so Lemma~\ref{lem:bisect} implies that 
\begin{equation}\label{bisectionconstant}
\|f^*\|_{\BMO{}{}}\leq 2\|f\|_{\BMO{\cW}{}}
\end{equation}
for all rearrangeable $f\in \BMO{\cW}{}(\Rn)$.

In order to obtain an improvement of the constants in  Eq.~\eqref{eq-bds}, 
it remains to investigate the relationship between 
$\|\cdot\|_{\BMO{\cW}{}}$ and $\|\cdot\|_{\bmo}$. 
As a first step, we estimate the oscillation of $f$ over a false cube in terms
of its means over its subcubes.

\begin{lemma} 
\label{lem:partition}
Let $R$ be a false cube, given (up to a set of measure zero) by a disjoint 
union $R=\bigcup_\nu Q(\nu)$  of $2^m$ cubes of equal size.
Then 
$$
\frac{1}{2^m}\sum_\nu |f_{Q(\nu)}-f_R| 
\ \le \ \cO(f, R)
\ \le \ \|f\|_\bmo + \frac{1}{2^m}\sum_\nu |f_{Q(\nu)}-f_R| 
$$
for all $f\in\bmo(\R^n)$.
\end{lemma}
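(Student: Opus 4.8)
The plan is to reduce both inequalities to the triangle inequality, applied to the integral over $R$ one subcube at a time. The key structural fact is that the $2^m$ subcubes are congruent, so each satisfies $|Q(\nu)| = 2^{-m}|R|$ and they partition $R$ up to a set of measure zero; this is precisely what happens when a false cube (sidelength $2s$ in $m$ coordinates and $s$ in the remaining $n-m$) is bisected along each of its $m$ long directions. The starting identity is
$$
\cO(f,R) \;=\; \fint_R |f-f_R| \;=\; \sum_\nu \frac{|Q(\nu)|}{|R|}\fint_{Q(\nu)}|f-f_R| \;=\; \frac{1}{2^m}\sum_\nu \fint_{Q(\nu)}|f-f_R|\,,
$$
so that everything follows by bounding the inner average $\fint_{Q(\nu)}|f-f_R|$ from below and from above.

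For the lower bound I would use that the modulus of an average is at most the average of the modulus, giving $\fint_{Q(\nu)}|f-f_R| \ge \bigl|\fint_{Q(\nu)}(f-f_R)\bigr| = |f_{Q(\nu)}-f_R|$ for each $\nu$; summing with weights $2^{-m}$ yields the left-hand inequality. For the upper bound I would instead insert $f_{Q(\nu)}$ and apply the triangle inequality on each subcube:
$$
\fint_{Q(\nu)}|f-f_R| \;\le\; \fint_{Q(\nu)}|f-f_{Q(\nu)}| + |f_{Q(\nu)}-f_R| \;=\; \cO(f,Q(\nu)) + |f_{Q(\nu)}-f_R| \;\le\; \|f\|_\bmo + |f_{Q(\nu)}-f_R|\,,
$$
using that each $Q(\nu)$ is a cube and hence $\cO(f,Q(\nu))\le\|f\|_\bmo$. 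Averaging over $\nu$ with weights $2^{-m}$, the constant term contributes $\|f\|_\bmo$ (total weight $1$) and the remaining term contributes $\frac{1}{2^m}\sum_\nu |f_{Q(\nu)}-f_R|$, which is the right-hand inequality.

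I do not anticipate a genuine obstacle: the statement is elementary, and the only thing requiring care is the bookkeeping that each $Q(\nu)$ carries weight exactly $2^{-m}$ and that the $Q(\nu)$ exhaust $R$, both of which are immediate from the definition of a false cube. The mild subtlety worth flagging is that the decomposition used here is the specific bisection of $R$ along its $m$ long coordinates into $2^m$ cubes of sidelength $s$, rather than an arbitrary partition — this is what guarantees the $Q(\nu)$ are honest cubes so that $\cO(f,Q(\nu))\le\|f\|_\bmo$ is available.
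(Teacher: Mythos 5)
Your argument is correct and follows essentially the same route as the paper's proof: decompose $\cO(f,R)$ as a weighted sum of averages $\fint_{Q(\nu)}|f-f_R|$ over the $2^m$ congruent subcubes, use $\bigl|\fint_{Q(\nu)}(f-f_R)\bigr|\le\fint_{Q(\nu)}|f-f_R|$ for the lower bound, and insert $f_{Q(\nu)}$ with the triangle inequality plus $\cO(f,Q(\nu))\le\|f\|_\bmo$ for the upper bound. No gaps; the bookkeeping with weights $2^{-m}$ matches the paper exactly.
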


\begin{proof} For the lower bound, we write
$$
\cO(f, R) 
\ = \ \sum_\nu \frac{|Q(\nu)|}{|R|} \fint_{Q(\nu)}\!|f-f_R|
\ \ge \ \frac{1}{2^m} \sum_\nu |f_{Q(\nu)}-f_R|\,.
$$
For the upper bound, we use the triangle inequality
to obtain
\begin{align*}
\cO(f, R) 
&\le  \frac{1}{2^m}\sum_\nu \fint_{Q(\nu)}\!
\bigl( |f-f_{Q(\nu)}| + |f_{Q(\nu)}-f_R|\bigr)\\
&\le \sup_\nu
\cO(f, Q(\nu)) + \frac{1}{2^m}\sum_\nu |f_{Q(\nu)}-f_R|\,,
\end{align*}
and bound the first term by $\|f\|_\bmo$.  
\end{proof}

The next lemma gives a dimension-free bound on the difference 
between the means of a function $f\in\bmo(\R^n)$ 
on a pair of neighbouring cubes. In~\cite[Lemma 3]{wik}, Wik obtains the analogous estimate for the medians, with the constant 6.

\begin{lemma}
\label{lem:neighbors}
Let $Q_1,Q_2$ be two cubes of equal size in
$\R^n$ that share an $(n-1)$-dimensional face.
Then $|f_{Q_1}-f_{Q_2}|\le 4 \|f\|_\bmo$.
\end{lemma}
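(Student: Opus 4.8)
The plan is to bound $|f_{Q_1}-f_{Q_2}|$ by inserting one auxiliary cube and passing through a short chain of averages, each link of which compares a cube with one of its two equal halves. After a translation I may assume $Q_1$ and $Q_2$ share the face lying in $\{x_n=0\}$, and I write $\ell$ for the common sidelength. The first step is to introduce the \emph{straddling cube} $Q_0$: the cube of sidelength $\ell$ centred on the shared face, equivalently the translate of $Q_1$ by $\tfrac{\ell}{2}e_n$. It meets $Q_1$ in the rectangle $P_1:=Q_0\cap Q_1$ and meets $Q_2$ in $P_2:=Q_0\cap Q_2$, with $|P_1|=|P_2|=\tfrac12|Q_0|$ and $Q_0=P_1\cup P_2$ up to a null set. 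Then
$$
|f_{Q_1}-f_{Q_2}|\ \le\ |f_{Q_1}-f_{P_1}|+|f_{P_1}-f_{P_2}|+|f_{P_2}-f_{Q_2}|,
$$
and the goal is to bound these three terms by $\|f\|_\bmo$, $2\|f\|_\bmo$, and $\|f\|_\bmo$.

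The second step is an elementary halving estimate. If a shape $S$ is the disjoint union, up to a null set, of sets $A$ and $B$ with $|A|=|B|=\tfrac12|S|$, then $f_S=\tfrac12(f_A+f_B)$, so $|f_A-f_S|=|f_B-f_S|=\tfrac12|f_A-f_B|$; summing the two deviations gives
$$
|f_A-f_B|=|f_A-f_S|+|f_B-f_S|\le \fint_A|f-f_S|+\fint_B|f-f_S|=\frac{2}{|S|}\int_S|f-f_S|=2\,\cO(f,S),
$$
hence $|f_A-f_S|\le\cO(f,S)$. The third step is to apply this three times: with $S=Q_0$, $A=P_1$, $B=P_2$ it gives $|f_{P_1}-f_{P_2}|\le 2\,\cO(f,Q_0)\le 2\|f\|_\bmo$ since $Q_0$ is a cube; with $S=Q_1$, $A=P_1$, $B=Q_1\setminus P_1$ (the two equal halves of $Q_1$) it gives $|f_{Q_1}-f_{P_1}|\le\cO(f,Q_1)\le\|f\|_\bmo$; and symmetrically $|f_{Q_2}-f_{P_2}|\le\|f\|_\bmo$. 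Summing yields $|f_{Q_1}-f_{Q_2}|\le 4\|f\|_\bmo$.

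I do not expect a genuine obstacle; the one point needing attention is that $P_1$ and $P_2$ are rectangles rather than cubes, so every appeal to $\|f\|_\bmo$ must be channelled through the true cubes $Q_0,Q_1,Q_2$ — which is precisely what the choice of the straddling cube together with the halving estimate makes possible. It is important to use the halving estimate in its sharp form $|f_A-f_S|\le\cO(f,S)$ (obtained by summing both deviations) rather than the cruder $|f_A-f_S|\le\tfrac{|S|}{|A|}\cO(f,S)=2\,\cO(f,S)$ coming from \eqref{O-subset}: the sharp form is what produces the constant $4=1+2+1$ in place of the $8=2+4+2$ one would otherwise get, and it parallels (and improves on) the constant $6$ in Wik's median estimate.
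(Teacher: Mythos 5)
Your proof is correct, and it is essentially the paper's argument: the paper also introduces the straddling cube $Q_0$ bisected by the shared face and uses the same sharp halving estimate (there phrased as $\tfrac12(|f_{A_j}-f_{Q_j}|+|f_{B_1}-f_{Q_j}|)\le\cO(f,Q_j)$ together with $f_{Q_j}=\tfrac12(f_{A_j}+f_{B_1})$), only organizing the chain as $Q_1\to P_1\to Q_0\to P_2\to Q_2$ with four bounds of $\|f\|_\bmo$ instead of your $1+2+1$ split. No changes are needed.
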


\begin{proof}
Let $Q_0$ be the cube of the same size as $Q_1, Q_2$ that is bisected by
the common interface. We will prove that
$$
|f_{Q_j}-f_{Q_0}|\le 2 \|f\|_\bmo\,,\qquad j=1,2\,,
$$
and then apply the triangle inequality.

Let $A_1=Q_1\setminus Q_0$, $A_0=Q_0\setminus Q_1$, and
$B_1=Q_1\cap Q_0$. By construction, $|A_1|=|A_0|=|B_1|=\frac12|Q_1|$.
By linearity of the mean,
$$ 
f_{Q_j}= \frac12(f_{A_j}+f_{B_1})\,,\quad j=0,1\,.
$$
On the other hand, in the same way as in the proof of Lemma~\ref{lem:partition},
$$
\frac12(|f_{A_j}-f_{Q_j}|+|f_{B_1}-f_{Q_j}|)\le\cO(f,Q_j)\le \|f\|_\bmo\,,
\quad j=0,1\,.
$$
It follows that
$$
|f_{A_j}-f_{Q_j}|=|f_{B_1}-f_{Q_j}|\le \|f\|_\bmo\,,
\quad j=0,1\,,
$$
and we conclude
that
$$
|f_{Q_1}-f_{Q_0}|\le |f_{Q_1}-f_{B_1}| +|f_{B_1}-f_{Q_0}|\le 2\|f\|_\bmo\,.
$$ 
By the same argument, $|f_{Q_2}-f_{Q_0}|\le 2\|f\|_\bmo$.
\end{proof}

We make use of the following concentration inequality.
\begin{lemma}
\label{lem:concentration}
Let 
$\{X_1,\ldots,X_m\}$ be $m$ independent 
random variables on a probability space, 
and let $g=g(X_1,\ldots,X_m)$ be a random variable
of finite expectation.
If there are constants $a_1,\dots, a_m$ such that
\begin{equation}\label{conditiononf}
|g(x_1,\ldots,x_i,\ldots,x_m)-g(x_1,\ldots,x_i'\ldots,x_m)|\leq a_i
\end{equation}
for all $x_i$, $x_i'$ and all $1\leq i\leq m$, then
\begin{equation}
\label{eq:concentration}
\E\left|g-\E g\right| \le \frac12\|a\|_2\,,
\end{equation}
where $\|a\|_2=\left(\sum a_i^2\right)^{\frac12}$ 
is the Euclidean length of the vector $(a_1,\ldots, a_m)\in\R^m$.
\end{lemma}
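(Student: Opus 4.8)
The plan is to prove the concentration bound Eq.~\eqref{eq:concentration} by the classical martingale (Doob/McDiarmid-type) approach, but carrying the $L^1$ bound through rather than exponentiating. First I would set up the Doob martingale associated with revealing the coordinates one at a time: let $\cF_0\subset\cF_1\subset\cdots\subset\cF_m$ be the filtration with $\cF_i=\sigma(X_1,\ldots,X_i)$, and put $g_i=\E[g\mid\cF_i]$, so that $g_0=\E g$ and $g_m=g$. The martingale differences are $d_i=g_i-g_{i-1}$, and one has the telescoping identity $g-\E g=\sum_{i=1}^m d_i$. The bounded-difference hypothesis Eq.~\eqref{conditiononf} gives, by independence of the $X_j$ and the usual argument (conditioning on $\cF_{i-1}$, the only randomness left in $d_i$ is $X_i$, and changing $X_i$ alone changes $g$ by at most $a_i$), that $\|d_i\|_\infty\le a_i$; more precisely $\mathrm{osc}(d_i\mid\cF_{i-1})\le a_i$, and since $\E[d_i\mid\cF_{i-1}]=0$ one actually gets $|d_i|\le a_i$ pointwise after the standard centering estimate.

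Next I would estimate $\E|g-\E g|$ using orthogonality of martingale differences: since $\E[d_i d_j]=0$ for $i\ne j$ (for $i<j$, condition on $\cF_{j-1}$ and use $\E[d_j\mid\cF_{j-1}]=0$), we have
\begin{equation}
\label{eq:L2mg}
\E|g-\E g|^2=\sum_{i=1}^m\E[d_i^2]\le\sum_{i=1}^m a_i^2=\|a\|_2^2,
\end{equation}
using $|d_i|\le a_i$. Then Cauchy--Schwarz gives $\E|g-\E g|\le\|a\|_2$, which is the bound with constant $1$ rather than $\tfrac12$. To gain the factor $\tfrac12$ one must not throw away the centering: the correct pointwise bound is not merely $|d_i|\le a_i$ but rather that $d_i$, conditionally on $\cF_{i-1}$, is a mean-zero random variable supported in an interval of length at most $a_i$, hence $\E[d_i^2\mid\cF_{i-1}]\le a_i^2/4$ (the variance of any mean-zero variable in an interval of length $L$ is at most $L^2/4$, the extremal case being the symmetric two-point distribution). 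Plugging this improved conditional second-moment bound into Eq.~\eqref{eq:L2mg} yields $\E|g-\E g|^2\le\tfrac14\|a\|_2^2$, and Cauchy--Schwarz then gives Eq.~\eqref{eq:concentration}.

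The one point requiring care --- and the main obstacle --- is justifying that $d_i$ really does lie, conditionally, in an interval of length $a_i$, not just that it has oscillation $a_i$ in some weaker sense; this is where independence is essential. Writing $g_i(x_1,\dots,x_i)=\E[g(x_1,\dots,x_i,X_{i+1},\dots,X_m)]$ (an honest function of the first $i$ arguments, by independence and Fubini, legitimate since $g$ has finite expectation), one has $d_i=g_i(X_1,\dots,X_i)-g_{i-1}(X_1,\dots,X_{i-1})$, and for fixed values $x_1,\dots,x_{i-1}$ the quantity $\sup_{x_i}g_i(x_1,\dots,x_i)-\inf_{x_i}g_i(x_1,\dots,x_i)\le a_i$ follows directly from Eq.~\eqref{conditiononf} by integrating in the remaining variables. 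Hence conditionally on $\cF_{i-1}$ the variable $d_i$ takes values in an interval of length $\le a_i$ and has conditional mean zero, so $\E[d_i^2\mid\cF_{i-1}]\le a_i^2/4$. Taking expectations and summing over $i$ completes the argument. (An integrability remark: one should note $g\in L^2$ is not assumed, but the argument only needs the $g_i$ to be well-defined and the differences bounded, which forces $g-\E g\in L^\infty+$ telescoping of bounded terms, so $g\in L^2$ automatically once $\E g$ is finite and all $a_i<\infty$; alternatively one truncates $g$ and passes to the limit.)
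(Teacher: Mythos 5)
Your proposal is correct and follows essentially the same route as the paper: a Doob-type martingale decomposition of $g-\E g$ via the functions $g_i$, orthogonality of the differences, the conditional variance bound $\E[d_i^2\mid \cF_{i-1}]\le a_i^2/4$ coming from the mean-zero-in-an-interval-of-length-$a_i$ observation, and then Cauchy--Schwarz. The paper phrases the conditional variance step as $\inf_c\int(g_i-c)^2\,d\mu_i\le \tfrac14 a_i^2$ with $c$ the midpoint of the range, which is the same estimate, so no substantive difference.
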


\begin{proof} 
We estimate the variance
of $g(X_1,\dots, X_m)$ using a martingale argument. 
Let $\mu_i$ be the probability distribution of $X_i$.
The difference $g-\E g$ can be written as a telescoping sum,
$$
g-\E g =\sum_{i=1}^m (g_i-g_{i-1})\,,
$$
where 
$g_i(x_1,\ldots, x_i):=\E g(x_1,\ldots, x_i, X_{i+1},\ldots X_m)$.
By construction, $g_0=\E g$ and $g_m=g$.

Consider the differences
$Y_i:= g_i(X_1,\ldots, X_i)-g_{i-1}(X_1,\dots, X_{i-1})$.
By independence, 
$$
\int\! g_i(x_1,\ldots , x_i)\,{d}\mu_i(x_i)=g_{i-1}(x_1,\ldots, x_{i-1})\,,
$$
which implies $\E Y_i=0$. Moreover, 
for each pair $i<j$, we have by a similar calculation
$$
\int\! \bigl(g_i(x_1,\ldots, x_i)-g_{i-1}(x_1,\ldots, x_{i-1})\bigr)
\bigl(g_j(x_1,\ldots, x_j)-g_{j-1}(x_1,\ldots, x_{j-1})\bigr)\,
d\mu_j(x_j)=0\,,
$$
which implies that $ {\rm Cov}\, (Y_i,Y_j)= \E(Y_iY_j)=0$
for $1\le i<j\le m$.

We next estimate the variance of $Y_i$.
By Eq.~\eqref{conditiononf}, we have for
any $x_1,\ldots, x_i$,
\begin{align*}
\int\! \bigl(g_i(x_1,\dots, x_i)-g_{i-1}(x_1,\ldots, x_{i-1})\bigr)^2\, d\mu_i(x_i)
&= \inf_{c\in\R} 
\int 
\bigl(g_i(x_1,\ldots, x_i)-c\bigr)^2\, d\mu_i(x_i)\\
&\le \frac14 a_i^2\,,
\end{align*}
The first inequality holds by definition of $g_{i-1}$
as  a mean, and the second inequality follows
by choosing $c$ as the arithmetic mean of the
extremal values of $g(x_1,\ldots, x_{i-1},\cdot)$.
Integrating over the remaining variables, we obtain
${\rm Var}\, Y_i\leq  \frac14 a_i^2$.

We conclude, using that the random variables $Y_i$ are uncorrelated, that
$$
\E (g-\E g)^2
={\rm Var}\, g(X_1,\dots, X_m)=\sum_{i=1}^m {\rm Var}\, Y_i\le \frac14 
\|a\|_2^2\,.
$$
By Schwarz' inequality, this implies Eq.~\eqref{eq:concentration}.
\end{proof}

The previous three lemmas combine to give a simple proof of Eq.~\eqref{eq:falsecompare}.

\begin{lemma}
\label{Wik}
Let $f\in\BMO{}{}(\R^n)$. Then 
$\|f\|_{\BMO{\cW}{}} \leq (1+2\sqrt{n-1}) \|f\|_{\BMO{}{}}$.
\end{lemma}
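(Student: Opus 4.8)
The plan is to bound $\cO(f,R)$ for an arbitrary false cube $R$ by decomposing $R$ into its $2^m$ dyadic subcubes $Q(\nu)$ of equal size, where $m$ is the number of "doubled" coordinate directions, and then to apply the three preceding lemmas in sequence. By Lemma~\ref{lem:partition}, it suffices to estimate $\frac{1}{2^m}\sum_\nu |f_{Q(\nu)}-f_R|$ by $2\sqrt{n-1}\,\|f\|_{\bmo}$, since the first term in the upper bound of that lemma already contributes the leading $\|f\|_\bmo$. The key observation is that the index set for the subcubes is naturally $\{0,1\}^m$ (each of the $m$ doubled directions is split in two), and along each such direction two adjacent subcubes share an $(n-1)$-dimensional face, so Lemma~\ref{lem:neighbors} gives a uniform bound of $4\|f\|_\bmo$ on the jump in the cube-average as one flips a single coordinate.

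The core step is to recognize this as a concentration-of-measure estimate. Equip $\{0,1\}^m$ with the uniform probability measure, realized as $m$ independent fair coin flips $X_1,\ldots,X_m$, and let $g(X_1,\ldots,X_m) := f_{Q(X)}$, the average of $f$ over the subcube indexed by the resulting vertex. Then $\E g = \frac{1}{2^m}\sum_\nu f_{Q(\nu)}$, which by linearity of the integral is exactly $f_R$ (the $2^m$ subcubes have equal volume and partition $R$ up to measure zero). Changing a single coordinate $x_i$ moves from one subcube to an adjacent one sharing an $(n-1)$-face, so by Lemma~\ref{lem:neighbors} the bounded-differences hypothesis of Lemma~\ref{lem:concentration} holds with $a_i = 4\|f\|_\bmo$ for each $i$. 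Hence Lemma~\ref{lem:concentration} yields
$$
\frac{1}{2^m}\sum_\nu |f_{Q(\nu)}-f_R| = \E|g-\E g| \le \tfrac12\|a\|_2 = \tfrac12\sqrt{m}\,\cdot 4\|f\|_\bmo = 2\sqrt{m}\,\|f\|_\bmo\,.
$$
Combining with the upper bound in Lemma~\ref{lem:partition} gives $\cO(f,R)\le (1+2\sqrt{m})\|f\|_\bmo$, and since $m\le n-1$ for any genuine false cube with $R\neq$ a cube (and $m=0$, i.e. $R$ itself a cube, is trivial with constant $1$), taking the supremum over all $R\in\cW$ yields $\|f\|_{\BMO{\cW}{}}\le(1+2\sqrt{n-1})\|f\|_\bmo$.

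The main point requiring care — though it is not deep — is the bookkeeping that identifies the vertex set of the subcube decomposition with $\{0,1\}^m$ and confirms that flipping coordinate $i$ produces two subcubes of \emph{equal size} sharing a full $(n-1)$-dimensional face, so that Lemma~\ref{lem:neighbors} applies verbatim; this is where the specific geometry of false cubes (doubled exactly in the first $m$ directions, unchanged in the rest) is used. One should also note why $m\le n-1$ can be assumed: if $m=n$ the rectangle is a cube scaled by $2$, hence still a cube, already covered by the $m=0$ case, so the worst exponent is indeed $n-1$. Everything else is a direct citation of Lemmas~\ref{lem:partition}, \ref{lem:neighbors}, and~\ref{lem:concentration}.
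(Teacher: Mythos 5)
Your proposal is correct and follows essentially the same route as the paper: decompose the false cube into $2^m$ equal subcubes indexed by $\{0,1\}^m$, apply Lemma~\ref{lem:partition}, use Lemma~\ref{lem:neighbors} to verify the bounded-differences hypothesis with $a_i=4\|f\|_\bmo$, and invoke Lemma~\ref{lem:concentration} to get the $2\sqrt{m}\,\|f\|_\bmo$ bound, concluding via $m\le n-1$ for non-cube false cubes. The only cosmetic difference is your labelling of the trivial cube case (the paper treats it as $m=n$ rather than $m=0$), which does not affect the argument.
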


\begin{proof}
Let $f\in \BMO{}{}(\R^n)$ and 
$R$ be an arbitrary false cube
that has sidelength $2s$ in the first $m$ directions, 
$1\leq m \leq n$, and sidelength $s$ in the 
remaining $n-m$ dimensions. If $m=n$, then $R$ is a cube and 
there is nothing to show.

If $1\le m<n$, we write $R$ as a 
union of $2^m$ subcubes $Q(\nu)$ indexed by binary strings 
$\nu\in \{0,1\}^m$. By Lemma~\ref{lem:partition},
$$\fint_{R}|f-f_R| 
\le \|f\|_\bmo + \frac{1}{2^m}\!\sum_{\nu\in \{0,1\}^m}
|f_{Q(\nu)}-f_R|\,.
$$

Set $g(\nu):=f_{Q(\nu)}$ for $\nu\in \{0,1\}^m$. We consider $g$ as a random variable on the
space of binary strings $\{0,1\}^m$, equipped with
the uniform measure.
and verify the hypotheses of Lemma~\ref{lem:concentration}.
Note that $f_R=\E g$ by the linearity of expectation.
The components $(X_1,\dots, X_m)$ of a random 
binary string are Bernoulli random variables with bias $p=\frac12$.
By Lemma~\ref{lem:neighbors},
$$
|f_Q-f_{Q'}|\le  4\|f\|_\bmo
$$
for any pair of adjacent cubes.
Therefore, Eq.~\eqref{conditiononf} holds with $a_i=4\|f\|_\bmo$
for $i=1,\ldots m$, and we obtain from Lemma~\ref{lem:concentration} that
$\E|g-\E g|\le 2\sqrt{m}\|f\|_{\bmo}$;
that is,
$$
\frac{1}{2^m}\!\sum_{\nu\in \{0,1\}^m}
|f_{Q(\nu)}-f_R|\le 2\sqrt{m}\|f\|_\bmo\,.
$$
Since $m\le n-1$, this concludes the proof.
\end{proof}

We finally have all the necessary components to prove Theorem~\ref{thm-newbound}.

\begin{proof}[Proof of Theorem~\ref{thm-newbound}]
This follows from a combination of Eq.~\eqref{bisectionconstant} and Lemma~\ref{Wik}. 
\end{proof}


\section{{\bf Symmetric decreasing rearrangement}}
\label{sec:SDR}


This section is dedicated to the proof of 
Theorem~\ref{thm-SDRequivalent}. 
We consider BMO with respect to a basis $\cA$ that is adapted
to the radial symmetry of $Sf$. This basis consists
of all balls centred at the origin, along with
annular sectors obtained by intersecting an annulus with a 
suitable spherical cone. 

For $x\in\R^n\setminus\{0\}$, $0<\rho\leq|x|$, 
and $\alpha\in \left(0,\frac{\pi}{2}\right]$, write
$$
A(x,\rho,\alpha):=\{y\in\R^n:
|x|-\rho<|y|<|x|+\rho,\, y\cdot x >|x|\, |y|\,\cos\alpha\}\,.
$$
The set $A(x,\rho,\alpha)$ is the sector of 
width $2\rho$ and aperture $\alpha$ centred at $x$. Then 
$$
\cA=\bigl\{B(0,r): r>0\bigr\}\cup 
\left\{A(x,\rho,\alpha):\rho=|x|\sin\alpha,\alpha\in\left(0,\frac{\pi}{2}\right]\right\}\,
$$
is a basis of shapes in $\R^n$. 

This basis captures those sets on which the symmetric decreasing 
rearrangement $Sf$ can be compared directly with $f^*$. More 
generally, $\bmo(\R_+)$ is isometric to
the subspace of radial functions in $\BMO{\cA}{}(\R^n)$.

\begin{lemma}
\label{lem-AI}
Let $f$ be a radial function on $\Rn$, defined by
$f(x)=g(\omega_n|x|^n)$. Then, 
$f\in \BMO{\cA}{}(\R^n)$ if and only if $g\in \bmo(\R_+)$, with
$$
\|f\|_{\BMO{\cA}{}} = \|g\|_{\bmo}\,.
$$
\end{lemma}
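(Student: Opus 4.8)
The plan is to establish a measure-preserving correspondence between the shapes in $\cA$ and intervals in $\R_+$, under which the mean oscillation of a radial $f$ over a shape $S\in\cA$ equals the mean oscillation of $g$ over the corresponding interval. Concretely, for a ball $B(0,r)$ we have $|B(0,r)|=\omega_nr^n$, and since $f$ is radial with $f(x)=g(\omega_n|x|^n)$, the pushforward of normalized Lebesgue measure on $B(0,r)$ under $x\mapsto\omega_n|x|^n$ is exactly normalized Lebesgue measure on the interval $(0,\omega_nr^n)$; hence $\cO(f,B(0,r))=\cO(g,(0,\omega_nr^n))$. The key computation is to check the analogous identity for the annular sectors $A(x,\rho,\alpha)$ with $\rho=|x|\sin\alpha$: I would show that the map $y\mapsto\omega_n|y|^n$ pushes normalized Lebesgue measure on $A(x,\rho,\alpha)$ forward to normalized Lebesgue measure on the interval $\bigl(\omega_n(|x|-\rho)^n,\,\omega_n(|x|+\rho)^n\bigr)$. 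This is the crux: one writes the volume of the truncated sector $\{y\in A(x,\rho,\alpha): |y|<u\}$ in polar coordinates as $(\text{solid-angle factor})\cdot\int_{|x|-\rho}^{u} r^{n-1}\,dr = (\text{const})\cdot\bigl(u^n-(|x|-\rho)^n\bigr)$, and observes that the solid-angle factor is a constant independent of the radial variable (it depends only on $\alpha$), so it cancels in the normalized measure and the radial density is proportional to $r^{n-1}$ — precisely what makes $r\mapsto\omega_n r^n$ measure-preserving onto the interval.

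Granting this identity, the conclusion is immediate: every shape in $\cA$ corresponds bijectively to an interval in $\R_+$, and for each such pair $\cO(f,S)=\cO(g,J)$ where $J$ is the image interval. Conversely, the balls alone already realize every interval of the form $(0,b)$, and more importantly every bounded interval $(a,b)\subset\R_+$ arises as the image of some sector: given $0\le a<b$, choose $\alpha\in(0,\tfrac\pi2]$ arbitrarily, then solve $\omega_n(|x|-\rho)^n=a$, $\omega_n(|x|+\rho)^n=b$ with $\rho=|x|\sin\alpha$ for the center $x$ — this is a consistent pair of equations once we pick $|x|$ so that $(|x|-\rho)/(|x|+\rho)=(a/b)^{1/n}$, which fixes $\sin\alpha$, hence determines $\alpha$ (note the constraint $\rho\le|x|$ is automatic since $a\ge0$ forces $\sin\alpha\le1$). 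Thus the correspondence $S\leftrightarrow J$ is surjective onto the collection of bounded intervals in $\R_+$, which is the defining basis for $\bmo(\R_+)$. Taking suprema over the two matched families yields $\|f\|_{\BMO{\cA}{}}=\|g\|_{\bmo}$, and the finiteness of one side is equivalent to that of the other, giving the stated "if and only if."

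The main obstacle I anticipate is verifying cleanly that the solid-angle (angular) part of the sector $A(x,\rho,\alpha)$ is genuinely constant along the radial direction — i.e., that slicing $A(x,\rho,\alpha)$ at radius $r$ gives a spherical cap of angular measure depending only on $\alpha$, not on $r$. This follows because the defining condition $y\cdot x>|x|\,|y|\cos\alpha$ reads $\cos(\angle(y,x))>\cos\alpha$, a condition on the direction $y/|y|$ alone, independent of $|y|$; so the sector really is a product (in polar coordinates) of the radial shell $(|x|-\rho,|x|+\rho)$ with a fixed spherical cap. Once this product structure is made explicit, the change of variables $s=\omega_n r^n$, $ds=n\omega_n r^{n-1}dr$, turns the normalized integral of $|f-f_S|=|g(\omega_n|y|^n)-g_J|$ over $S$ into the normalized integral of $|g-g_J|$ over $J$, with all angular and normalization constants cancelling. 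The only remaining care is the boundary/degenerate cases: $a=0$ (handled by the balls, or by letting $\alpha\to\tfrac\pi2$ so the sector fills a half-ball) and the trivial observation that sets of measure zero (the sphere-boundary pieces, the ray $y\cdot x=|x||y|\cos\alpha$) do not affect any average.
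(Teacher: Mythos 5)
Your proposal is correct and follows essentially the same route as the paper: represent each shape of $\cA$ in polar coordinates as a product of a radial interval with a fixed spherical cap (or the whole sphere), change variables $s=\omega_n r^n$ so that the normalized measure pushes forward to normalized Lebesgue measure on an interval, and conversely realize every interval $(a,b)\subset\R_+$ by a centred ball (when $a=0$) or a sector with $|x|$ the midpoint of the radial interval and $\alpha=\arcsin(\rho/|x|)$. The only quibble is the phrase ``choose $\alpha$ arbitrarily,'' which contradicts your own (correct) observation a line later that the ratio $(a/b)^{1/n}$ determines $\sin\alpha$; aside from that slip the argument matches the paper's proof.
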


\begin{proof}
Integration over any shape $A\in\cA$ can be represented by an integral
in spherical coordinates over $J\times S$, where
$J=(a,b)\subset\R_+$ is an interval and $S$ is 
either the entire sphere or a spherical cap.
Let $\nu$ be the measure on $\R_+$
with density $n \omega_n r^{n-1}$. 
Since $f$ is radial, two changes of variables give
\[
f_A =\fint_{A}\!f = \fint_J\! g(\omega_n r^n) d\nu(r) = \fint_I\! g
= g_I\,,
\]
where $I=(\omega_n a^n, \omega_n b^n)$ (see~\cite[Corollary 2.51]{fol}).
In the same way, $\cO(f,A)=\cO(g, I)$,
from where it follows that $\|f\|_{\BMO{\cA}{}}\leq \|g\|_{\bmo}$.

Conversely, given an open interval $I\subset(0,\infty)$, set
$J=\{s>0: \omega_ns^n\in I\}$. We distinguish between
two cases. If the left endpoint of $J$ is at $0$, 
we take $A$ to be the ball of measure $|I|$ centred at the origin.
Otherwise,  we choose 
$A=A(R e_1,\rho, \alpha)$, where $R$ is the centre of 
$J$, $\rho=\frac12 |J| < R$,
$\alpha=\arcsin(\rho/R)$, and $e_1$ is the standard unit vector $e_1=(1,0,\ldots,0)$.
Reversing the calculation above, we
see that $\cO(g,I)=\cO(f,A)$, and hence
$\|g\|_{\bmo}\le \|f\|_{\BMO{\cA}{}}$.
\end{proof}

Consider two rearrangeable functions $f_1,f_2$. The 
function $Sf_1-Sf_2$ 
is radial, and satisfies $(Sf_1-Sf_2)(x)=(f_1^*-f_2^*)(\omega_n|x|^n)$ 
by the definition of the symmetric decreasing rearrangement. 
Lemma~\ref{lem-AI} implies that 
\begin{equation}\label{SDR-AI}
\|Sf_1-Sf_2\|_{\BMO{\cA}{}} =\|f_1^*-f_2^*\|_{\bmo} \,.
\end{equation}
We next compare $\cA$ with the basis of balls,
$\cB$, and hence with the standard basis $\cQ$.

\begin{lemma}
\label{lem-AB} 
The basis $\cA$ is equivalent to the basis $\cB$. 
\end{lemma}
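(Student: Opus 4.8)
The plan is to verify Definition~\ref{equiv} for the pair $\cA$ and $\cB$ by producing, for each shape in one basis, a containing shape in the other with controlled volume ratio, splitting the argument according to the two types of elements of $\cA$ (origin-centred balls, and annular sectors $A(x,\rho,\alpha)$ with $\rho=|x|\sin\alpha$). For an origin-centred ball $B(0,r)\in\cA$ there is nothing to do in one direction: it is already a ball, and conversely it lies in itself, so these contribute ratio $1$. The content of the lemma is therefore entirely about the sectors.

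First I would treat the easy inclusion: given a sector $A=A(x,\rho,\alpha)\in\cA$ with $\rho=|x|\sin\alpha$, I want a Euclidean ball $B\supset A$ with $|B|\le c\,|A|$. Geometrically $A$ sits between the spheres of radius $|x|-\rho$ and $|x|+\rho$, inside the spherical cone of half-angle $\alpha$ about the direction $x/|x|$; since $\rho=|x|\sin\alpha$, the ``far'' boundary points are at distance $|x|+\rho$ from $0$ and the lateral extent is governed by $\alpha$, so $A$ is contained in a ball of radius comparable to $\rho$ centred at a point at distance $\approx|x|$ from the origin. I would compute an explicit enclosing ball — e.g. the ball whose diameter is the segment of the ray through $x$ from the inner sphere to the outer sphere has radius $\rho$, and enlarging its radius by a fixed factor (independent of $\alpha$, using $\rho=|x|\sin\alpha$ to control the angular spread) captures all of $A$; this gives $|B|\le c\,|A|$ for an absolute constant $c$ after bounding $|A|$ from below by the volume of an inscribed ball. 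For the reverse inclusion, given a ball $B=B(y,r)\in\cB$ I need a shape $A\in\cA$ with $A\supset B$ and $|A|\le\tilde c\,|B|$: if $r$ is comparable to $|y|$ or larger, I take $A$ to be an origin-centred ball $B(0,|y|+r)$, whose volume is at most a fixed multiple of $|B|$ in that regime; if $r\ll|y|$, then $B$ is a small ball far from the origin and I take $A=A(y',\rho,\alpha)$ with $y'$ on the ray through $y$, $\rho\approx r$, and $\alpha=\arcsin(\rho/|y'|)$ chosen so that the sector's radial width and angular aperture both exceed $2r$, again costing only a dimension-free constant.

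The one genuine subtlety is making the sector's \emph{aperture} constraint $\rho=|x|\sin\alpha$ compatible with containing a given small ball: the sector is ``thin'' near the inner sphere, so to cover a ball of radius $r$ sitting close to distance $|y|$ from the origin I must push the sector's centre $x$ slightly outward and take $\rho$ a fixed multiple of $r$ so that even the narrow part of the cone, at radius $|x|-\rho$, still has angular cross-section wide enough; tracking that the resulting volume is $\lesssim r^n=\omega_n^{-1}|B|$ requires estimating the sector volume $|A|$ both above and below (above by a cylinder of length $2\rho$ and cap radius $\approx\rho$, below by an inscribed ball of radius $\approx\rho$), and the whole point is that the ratio of these is absolute. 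I expect this case analysis — in particular pinning down the constants in the ``small $r$, large $|y|$'' regime for the reverse inclusion — to be the main obstacle; once the explicit enclosing shapes are written down, the volume comparisons are routine. The lemma then follows by collecting the worst of the constants obtained in the ball case and the sector case.

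Finally, since $\cB\approx\cQ$ by Lemma~\ref{lem-BQ}, transitivity of $\approx$ (immediate from Definition~\ref{equiv}) gives $\cA\approx\cQ$, which, together with Eq.~\eqref{SDR-AI} and Eq.~\eqref{B-equiv}, will feed into the proof of Theorem~\ref{thm-SDRequivalent}.
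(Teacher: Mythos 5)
Your overall strategy is the same as the paper's: sandwich each annular sector between an inscribed and a circumscribed Euclidean ball, and split the ball-to-sector direction according to whether the ball is close to the origin (use a centred ball) or far from it (use a sector). However, as written the proposal has a genuine gap: the two containments that constitute the entire content of the lemma are only asserted, and in the one place where you commit to a concrete construction --- fitting a ball $B(y,r)$ with $r\ll|y|$ inside a sector --- your plan is based on a misreading of the geometry. You worry that the aperture constraint $\rho=|x|\sin\alpha$ makes the sector ``too thin near the inner sphere,'' and propose to push the centre outward and inflate $\rho$ by a fixed factor. This is unnecessary: with $\alpha=\arcsin\bigl(\tfrac{r}{|x|}\bigr)$ the cone with apex at the origin is exactly tangent to $B(x,r)$, so $B(x,r)\subset A(x,r,\alpha)$ holds as stated; the verification is the short computation $r^2>|y-x|^2\ge |x|^2\bigl(1-(\xi\cdot\eta)^2\bigr)$, giving $\xi\cdot\eta>\cos\alpha$, together with the triangle inequality for the annulus condition. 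The same sandwich, $B(x,\rho)\subset A(x,\rho,\alpha)\subset B\bigl((\cos\alpha)x,2\rho\bigr)$, serves both directions of Definition~\ref{equiv} (the second inclusion uses $|(\cos\alpha)x|^2=|x|^2-\rho^2$), so no separate inscribed-ball estimate of $|A|$ is needed beyond this. Your modified construction could probably be pushed through, but you would then have to re-verify the aperture constraint for the displaced centre and redo both containments, i.e.\ exactly the work you have deferred.

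A second, quantitative point: your repeated claims that the volume ratios are ``absolute'' or ``dimension-free'' are false. The circumscribed ball has twice the radius of the inscribed one, so the natural constants here are of order $2^n$ (the paper records exactly $|\widetilde B|=2^n|B|$ in both directions), and no choice of enclosing/inscribed balls can make the ratio independent of $n$. This does not affect the lemma itself, which only asserts equivalence with some finite constants, but the explicit $2^n$ constants are what feed the factors $2^{-2n}$ and $n^{n/2}\omega_n$ in Theorem~\ref{thm-SDRequivalent}, so the dimension-free claim should be dropped rather than relied upon.
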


\begin{figure}[t]
\includegraphics[width=0.8 \linewidth]{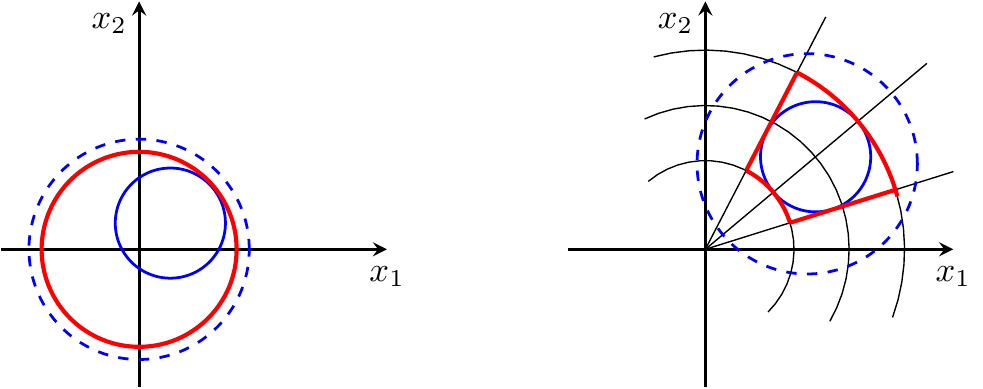}
\caption{\small Equivalence of the bases $\cA$ and $\cB$.}
\end{figure}

\begin{proof} 
\smallskip We first show that for each ball
$B$ there exist $A\in \cA$ and $\widetilde B\in \cB$ such that 
\begin{equation}
\label{shape-AB}
B \subset A\subset \tilde B\quad \text{and}
\quad |A|<|\widetilde B|= 2^n |B|\,,
\end{equation} 
see Figure~1.

Let a ball $B=B(x,r)$ be given. If $|x|<r$, then $B$ contains the origin.
In this case, we choose $A$ to be the ball of radius $|x|+r$ centred at the origin and $\widetilde B$ to be the ball of radius $2r$ centred at the origin. It follows that $|A| < |\widetilde B|=2^n|B|$,
where the inequality is strict because $A$ is a proper subset of $\widetilde B$. Since $A$ exhausts $\widetilde B$ as $|x|$ approaches $r$ from below, the constant $2^n$ is sharp.

If $|x|\ge r$, then the ball does not contain the origin.
In that case, we take $A=A(x,r,\alpha)$
where  $\alpha = \arcsin\bigl(\tfrac{r}{|x|}\bigr)$,
and $\widetilde B= B(\widetilde x\,, 2r)$,
where $\widetilde x=(\cos\alpha)\, x$.
Clearly, $A\in \cA$. We begin by showing that $B\subset A$. To that end, fix $y\in B$. The triangle inequality implies that 
\begin{equation}\label{annulus}
|x|-r< |x|-|x-y| \leq|y|\leq |x|+|x-y| <|x|+r\,.
\end{equation}
Writing $x=|x|\,\xi$ and $y=|y|\,\eta$, where
$\eta$ and $\xi$ are unit vectors, it follows that
$$
r^2 > |y-x|^2= |x|^2+ |y|^2-2|x|\, |y|\,\xi\cdot \eta 
\ge |x|^2(1-(\xi\cdot\eta)^2)\,.
$$
Hence,
\begin{equation}\label{dotproduct}
(\xi\cdot \eta)^2>1-\frac{r^2}{|x|^2}=1-\sin^2\alpha= \cos^2\alpha\,.
\end{equation}
Since $B$ does not include the origin, the angle between any vector terminating in $B$ and the vector terminating at the centre of $B$ is at most $\pi/2$. Thus, $x\cdot y>0$ and so $\xi\cdot \eta>0$. It follows from this and Eq.~\eqref{dotproduct} that $\xi\cdot \eta>\cos\alpha$. Together with Eq.~\eqref{annulus}, this implies that $y\in A$, demonstrating that $B\subset A$. 

To see that $A\subset\widetilde B$,
we bound the distance to any point $y=|y|\,\eta\in A$
from $\widetilde x=(\cos\alpha)|x|\,\xi$. 
Since both $x\in B\subset A$ and 
$y \in A=A(x,r,\alpha)$, we have $\xi\cdot \eta>\cos\alpha$.
Using that $|\widetilde x|^2=|x|^2-r^2$, we calculate
$$
|\widetilde x-y|^2
< |x|^2-r^2 +|y|^2 -2|x||y|\cos^2\alpha 
= (|x|-|y|)^2 - r^2 + 2|x||y|\sin^2\alpha\,.
$$
Since $|x|-r<|y|<|x|+r$ and $|x|\,\sin\alpha=r$,
it follows that
$$
(|x|-|y|)^2 - r^2 + 2|x||y|\sin^2\alpha< 2|x|(|x|+r)\sin^2\alpha 
< 4r^2\,.
$$
Therefore $A\subset B(\widetilde x, 2r)=\widetilde B$.  
This
proves Eq.~\eqref{shape-AB}.

\smallskip In the other direction,
given $A\in \cA$, we will construct $B,\widetilde B\in \cB$ such that 
\begin{equation}
\label{shape-BA}
B \subset A\subset \widetilde B\quad \text{and}
\quad |\widetilde B| =2^n|B|\leq 2^n|A|\,.
\end{equation} 
If $A$ is a centred ball, we take $B=A$ 
and $\widetilde B$ to be the centred ball of twice the radius. If $A= A(x,\rho,\alpha)$ with $\rho=|x|\,\sin\alpha$, 
we take $B=B(x,\rho)$ and $\widetilde B=B(\widetilde{x}, 2\rho)$, 
where $\widetilde{x}=(\cos\alpha)\, x$. 
The same calculations as above
show that Eq.~\eqref{shape-BA} holds for this choice 
of $B$ and $\widetilde B$.
\end{proof}

We can now establish the 
bi-Lipschitz equivalence between rearrangements.

\begin{proof}[Proof of Theorem~\ref{thm-SDRequivalent}]
Let $f_1, f_2$ be rearrangeable
functions. From Lemmas~\ref{lem-BQ} and \ref{lem-AB}, along with 
Eq.~\eqref{B-equiv}, it follows that 
$$
2^{-n}\omega_n\|Sf_1-Sf_2\|_{\BMO{\cB}{}}\leq \|Sf_1-Sf_2\|_\bmo\le 2^{-n} n^{\frac{n}{2}} \omega_n\|Sf_1-Sf_2\|_{\BMO{\cB}{}}
$$
and 
$$
2^{-n}\|Sf_1-Sf_2\|_{\BMO{\cA}{}}\leq \|Sf_1-Sf_2\|_{\BMO{\cB}{}}\leq 2^n \|Sf_1-Sf_2\|_{\BMO{\cA}{}}\,.
$$
Combining these with Eq.~\eqref{SDR-AI} yields 
Eq.~\eqref{eq-SDRequivalent}.
\end{proof}

We also have the following local version 
of Theorem~\ref{thm-SDRequivalent}.

\begin{lemma}
\label{SDRlocalequivalent}
Let $R > 0$ and $Q\subset B(0,R)$
be a cube of diameter $d$, centred at a 
point $x$ with $|x|\le R-d/2$.
There is an interval $I\subset (0,\omega_n R^n)$ of length
$|I|\le n \omega_nR^{n-1}d$, 
such that if $f_1,f_2$ are rearrangeable, then 
$$
\cO(Sf_1-Sf_2,Q)\leq n^{\frac{n}{2}}\omega_n\,\cO(f_1^*-f_2^*,I)\,.
$$
\end{lemma}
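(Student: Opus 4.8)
The plan is to enclose $Q$ in a shape $A\in\cA$ whose volume is comparable to $|Q|$, pass from $Q$ to $A$ by the monotonicity estimate Eq.~\eqref{O-subset}, and then invoke the change of variables from the proof of Lemma~\ref{lem-AI}, according to which the oscillation of a radial function over $A$ equals the oscillation of its one-dimensional profile over the corresponding interval in $\R_+$. Since $(Sf_1-Sf_2)(y)=(f_1^*-f_2^*)(\omega_n|y|^n)$, this profile is exactly $f_1^*-f_2^*$, so once the constants are tracked the claim follows, with $I$ depending only on $Q$ and not on the functions.

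First I would circumscribe $Q$ by its smallest enclosing ball. Writing $r=d/2$, every point of the open cube $Q$ lies within $r$ of its centre, so $Q\subset B(x,r)$, and the hypothesis $|x|\le R-d/2$ gives $|x|+r\le R$. I then apply to the ball $B(x,r)$ the construction used in the proof of Lemma~\ref{lem-AB}: if $|x|<r$, take $A$ to be the centred ball of radius $|x|+r$; if $|x|\ge r$, take $A=A(x,r,\alpha)$ with $\alpha=\arcsin(r/|x|)$. In both cases $B(x,r)\subset A$, hence $Q\subset A$, and Eq.~\eqref{shape-AB} gives $|A|<2^n|B(x,r)|=\omega_n(2r)^n=\omega_n d^n$. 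Since $Q$ has sidelength $d/\sqrt n$ we have $|Q|=n^{-n/2}d^n$, so $|A|/|Q|<n^{n/2}\omega_n$, and Eq.~\eqref{O-subset} yields
\[
\cO(Sf_1-Sf_2,Q)\le\frac{|A|}{|Q|}\,\cO(Sf_1-Sf_2,A)\le n^{n/2}\omega_n\,\cO(Sf_1-Sf_2,A).
\]

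Next I would identify the interval. By the computation in the proof of Lemma~\ref{lem-AI}, $\cO(Sf_1-Sf_2,A)=\cO(f_1^*-f_2^*,I)$, where $I$ is the image under $\rho\mapsto\omega_n\rho^n$ of the radial interval $J$ underlying $A$: in the first case $J=(0,|x|+r)$ and $I=(0,\omega_n(|x|+r)^n)$, and in the second $J=(|x|-r,|x|+r)$ and $I=(\omega_n(|x|-r)^n,\omega_n(|x|+r)^n)$. In both cases $|x|+r\le R$ forces $I\subset(0,\omega_n R^n)$. Writing $J=(a,b)$ with $0\le a<b\le R$ and using the elementary bound $b^n-a^n\le n\,b^{n-1}(b-a)$ together with $b\le R$ and $b-a=|J|\le 2r=d$ (for the latter, in the first case $|x|<r$ gives $b-a=|x|+r<2r$, while in the second case it is an equality), I get $|I|=\omega_n(b^n-a^n)\le n\omega_n R^{n-1}d$. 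Combining this with the previous display completes the proof.

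The argument involves no serious difficulty: the only point requiring attention is to run the two cases — $Q$ abutting the origin versus $Q$ bounded away from it — in lockstep with the corresponding cases of Lemma~\ref{lem-AB}, and to notice that in the first case the length of $J$ is still controlled by $d$ (rather than merely by $|x|+r$) precisely because $|x|<r$ there, which is what allows the uniform length bound $|I|\le n\omega_n R^{n-1}d$.
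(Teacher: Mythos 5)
Your proposal is correct and follows essentially the same route as the paper: circumscribe $Q$ by the ball $B(x,d/2)$, enlarge to the shape $A\in\cA$ from the proof of Lemma~\ref{lem-AB} so that $|A|\le \omega_n d^n = n^{n/2}\omega_n|Q|$, apply Eq.~\eqref{O-subset}, and then use the polar-coordinate identification from Lemma~\ref{lem-AI} to convert $\cO(Sf_1-Sf_2,A)$ into $\cO(f_1^*-f_2^*,I)$ with $|J|\le d$ and $|I|\le n\omega_n R^{n-1}d$. Your explicit case analysis (centred ball versus annular sector) and the bound $b^n-a^n\le nb^{n-1}(b-a)$ simply spell out details the paper leaves implicit.
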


\begin{proof}
Let $Q$ be as in the statement of the lemma.
Lemmas~\ref{lem-BQ} and~\ref{lem-AB}, 
along with Eq.~\eqref{O-subset}, imply that
$$
\cO(Sf_1-Sf_2,Q)\leq n^{\frac{n}{2}}\omega_n\cO(Sf_1-Sf_2,A)\,,
$$
where $A\in\cA$ is constructed, as in the proof of Lemma~\ref{lem-AB}, from the ball $B(x,d/2)$ and satisfies $Q \subset B \subset A\subset B(0,R)$ and
$|A|\le n^{\frac{n}{2}}\omega_n|Q| = \omega_n d^n$. 
As in the proof of Lemma~\ref{lem-AI}, $A$ is represented
in polar coordinates by
$J\times S$, where $J\subset (0,R)$ is an interval of length at most $d$,
and $S$ lies in the unit sphere.
Moreover,
$$
\cO(Sf_1-Sf_2,A)=\cO(f_1^*-f_2^*,I)\,,
$$
where the interval $I$ is the image of $J$ under the map
$r\mapsto \omega_n r^n$. 
The length of this interval is bounded by $|I|\le n\omega_n R^{n-1}|J|$.
\end{proof}

\section*{{Acknowledgements}}

The authors became aware of Wik's paper thanks to comments by Michael Cwikel following the talk of Carlos P\'{e}rez in the Corona Seminar.


\end{document}